\newtheorem{thm}{Theorem}[section]
\newtheorem{cor}[thm]{Corollary}
\newtheorem{lem}[thm]{Lemma}
\theoremstyle{definition}
\newtheorem{exmp}{Example}[section]
\theoremstyle{remark}
\numberwithin{equation}{section}
\begin{document}

\title[The Fredholm  
Navier-Stokes type equations]{The Fredholm 
Navier-Stokes type equations 
for the de Rham complex over weighted H\"older spaces 
}

\author{K.V. Gagelgans}

\address[Kseniya Gagelgans]
{Siberian Federal University
                                                 \\
         pr. Svobodnyi 79
                                                 \\
         660041 Krasnoyarsk
                                                 \\
         Russia}
\email{ksenija.sidorova2017@yandex.ru}

\author{A.A. Shlapunov}

\address[Alexander Shlapunov]
{Siberian Federal University
                                                 \\
         pr. Svobodnyi 79
                                                 \\
         660041 Krasnoyarsk
                                                 \\
         Russia}
\email{ashlapunov@sfu-kras.ru}

\subjclass {35K45, 58A10, 35Q35, 47B01}
\keywords{Navier-Stokes type equations,
         de Rham complex, Fredholm operator equations}

\begin{abstract}
We consider a family of initial problems for the Navier-Stokes type equations 
generated by the de Rham complex in ${\mathbb R}^n \times [0,T]$, $n\geq 2$, 
with a positive  time $T$ over a scale weighted anisotropic H\"older spaces. 
As the weights control the order of zero  
at the infinity with respect to the space variables for vectors fields 
under the consideration,  
this actually leads to initial problems over a compact 
manifold  with the singular conic point at the infinity. 
We prove that each  problem from the family induces Fredholm open
injective  mappings on elements of the scales. At the step $1$ of the  
complex we may apply the results to the classical 
Navier-Stokes equations for incompressible viscous fluid. 
\end{abstract}

\maketitle

\section*{Introduction}
\label{s.Int}

The Navier-Stokes equations  describe the dynamics of incompressible viscous fluid
that is of great importance in applications, see, for instance, 
\cite{Tema79}, \cite{Lady70}. Essential contributions has been published in the research articles
   \cite{Lera34a,Lera34b},
   \cite{Kolm42},
   \cite{Hopf51},
as well as surveys and books
   \cite{Lady70},
  \cite{Lion69} 
   \cite{Tema79},
etc. Actually, the problem is solved 
in the frame of the concept of weak solutions, see, J.~Leray \cite{Lera34a,Lera34b}, 
E.~Hopf \cite{Hopf51}, O.A.~Ladyzhenskaya \cite{Lady70}, but 
no general uniqueness theorem for weak 
solutions has been known except the two-dimensional case. As far as we know, 
there are no general results on the global solvability in time for the problem in spaces of 
sufficiently regular vector fields where the uniqueness theorems for it are available, too. 
We point out an important direction related to the problem of the existence of regular 
solutions to the Navier-Stokes equation: S. Smale \cite{Sm65} developed 
the concept of Fredholm non-linear mappings of Banach spaces applicable 
to a wide class of non-linear equations of Mathematical Physics (cf. \cite{Tema95} for 
the steady version of the Navier-Stokes equations). 

Recently, the Navier-Stokes type equations were considered in the frame of 
elliptic differential complexes, see \cite{MShlT19}, \cite{ShlTaArxiv}, 
\cite{ShlTa21}, \cite{Po22} over 
scales of Bochner-Sobolev type spaces parametrized by smoothness index  
$s \in {\mathbb Z}_+$ where the Sobolev embedding 
theorems provide point-wise smoothness for sufficiently large $s$. 

On the other hand, results of paper \cite{PlSv03} demonstrate 
that considering the Navier-Stokes type equations over the whole space 
${\mathbb R}^n \times [0,+\infty)$ it is important 
to control the order of zero 
at the infinity with respect to the space variables for the corresponding solutions. 
Namely, \cite{PlSv03}  provides an instructive  example of a non-linear problem in 
${\mathbb R}^n \times [0,T)$, structurally similar to the Cauchy problem for the 
Navier-Stokes equations and `having the same energy estimate'{}, but,  according to some 
considerations including  numerical simulations, admitting singular 
solutions of special type for smooth data if $n\geq 5$. An essential role 
in the arguments of this paper plays the fact that 
certain asymptotic behaviour of the initial data at the infinity 
with respect to the space variables  prevents blow-up behaviour in a finite time interval 
for the considered solutions, cf. also comments 
by \cite[formulas (4), (5)]{Feff00} related to the data 
of the Navier-Stokes equations for incompressible fluid. 

One of the possibilities deal with the asymptotic was indicated in 
\cite{ShlTa18} where the Navier-Stokes  equations 
for incompressible viscous fluid were considered
in ${\mathbb R}^n \times [0,T]$, $n\geq 3$,  
for a positive  time $T$ over a scale weighted anisotropic H\"older spaces 
with the weights controlling the order of decreasing  
at the infinity with respect to the space variables for the vectors fields 
under the consideration. This actually leads to an initial problem where the space 
variables belong to a compact 
manifold  with the singular conic point at the infinity, cf. \cite{Be11}. 

In the present paper we extend the results of \cite{ShlTa18} to 
a family of initial problems for the Navier-Stokes type equations 
generated by the de Rham complex in ${\mathbb R}^n \times [0,T]$, $n\geq 2$, 
with a positive  time $T$ over a scale of weighted anisotropic H\"older spaces. 
It is worth to say that the problem, discussed in \cite{PlSv03}, is included 
to the consideration. 
Using the recent developments of the Hodge theory for the de Rham complex  over these spaces, 
see \cite{GaSh19}, \cite{Ga22}, we involve  weight indexes $\delta>n/2 $, 
that corresponds  to the asymptotic 
$|x|^{-\delta-|\alpha|}$, $x\in {\mathbb R}^n$, as $|x| \to + \infty$, 
for the related solutions and their partial derivatives of order $\alpha \in {\mathbb Z}_+$.
Namely, we consider the Navier-Stokes type equations in the framework of 
the theory of operator equations in Banach space and 
we prove that each initial problem from the family induces Fredholm open
injective  mappings on elements of the scales. At the step $1$ of the  
complex we may apply the results to the classical 
Navier-Stokes equations for incompressible viscous fluid.  

We do not discuss existence theorems here but we hope that the use 
of the weighted H\"older spaces with proper weight indexes 
may exclude the blow-up behaviour of solutions 
to the Navier-Stokes type equations considered in \cite{PlSv03}.

\section{Function spaces, embedding theorems and a non-linear problem}
\label{s.1}

Let ${\mathbb R}^n$ be the $n$-dimensional Euclidean space with the coordinates  
$x=(x_1, \dots , x_n)$.  To introduce weighted H\"older spaces over  ${\mathbb R}^n$   we set
$$
   w (x)=
 \sqrt{1 + |x|^2}, \,\, 
   w (x,y)
  =
  \max \{ w (x), w (y) \} \sim \sqrt{1 + |x|^2 + |y|^2}
$$
for $x, y \in \mathbb{R}^n$. Let $\delta \in \mathbb{R}$. 
(Note that $\delta$ is tacitly assumed to be nonnegative.) 
For $s = 0, 1, \ldots$, denote by $C^{s,0,\delta} $ the space of all $s$ times continuously differentiable functions on $\mathbb{R}^n$ with finite norm 
$$
   \| u \|_{C^{s,0,\delta} }
 = \sum_{|\alpha| \leq s}
   \sup_{x \in \mathbb{R}^n}
   (w (x))^{\delta+|\alpha|}
   |\partial^\alpha u (x)|.
$$
For $0 < \lambda \leq 1$, we introduce
$$
   \langle u \rangle_{\lambda,\delta}
 = \sup_{{{x,y \in \mathbb{R}^n \atop x \neq y} \atop |x-y| \leq |x|/2}}
   (w (x,y))^{\delta+\lambda} \frac{|u (x) - u (y)|}{|x-y|^\lambda}.
$$
and  we define 
$C^{0,\lambda,\delta} $ to
consist of all continuous functions on $\mathbb{R}^n$ with finite norm
$$
   \| u \|_{C^{0,\lambda,\delta} }
 = \| u \|_{C^{0,\lambda} (\overline{U})}
 + \| u \|_{C^{0,0,\delta} } + 
\langle u \rangle_{\lambda,\delta},
$$
where $U$ is a small neighbourhood of the origin in $\mathbb{R}^n$ and $C^{0,\lambda} (\overline{U})$ 
is the standard H\"older space over the compact $\overline U$.
Finally, for $s \in \mathbb{Z}_{\geq 0}$, we introduce $C^{s,\lambda,\delta} $ to be
the space of all $s$ times continuously differentiable functions on $\mathbb{R}^n$  with finite norm
$$
   \| u \|_{C^{s,\lambda,\delta} }
 = \sum_{|\alpha| \leq s}
   \| \partial^\alpha u \|_{C^{0,\lambda,\delta+|\alpha|} }.
$$

The normed spaces $C^{s,\lambda,\delta}$ constitute a scale of Banach spaces
parametrised by
   $s \in \mathbb{Z}_{\geq 0}$,
   $\lambda \in [0,1]$ and
   $\delta \in \mathbb{R}$.
The properties of the scale (e.g. natural coninuous and compact 
embeddings) are well known, see, for instance, \cite{ShlTa18}, \cite{GaSh19}. 

Next, denote by $\varLambda^q$ the bundle of exterior forms of 
degree $0 \leq q \leq n$ over $\mathbb{R}^n$. 
We write $C^\infty_{\varLambda^q} (\mathbb{R}^n)$ for the space of
all differential forms of degree $q$ with $C^\infty$ coefficients on $\mathbb{R}^n$.
These space constitute the so-called de Rham complex $C^\infty_{\varLambda^\cdot} (\mathbb{R}^n)$ on 
$\mathbb{R}^n$ whose differential is given by the exterior derivative $d$. 
To display $d$ acting on $q\,$-forms one uses the designation  $du := d_q u$ 
for $u \in C^\infty_{\varLambda^q} (\mathbb{R}^n)$ (see for instance \cite{deRh55}); 
it is convenient to set $d_q  =  0$ if $ q<0 $ or $q\geq n$.  
As usual, denote by $d^*_q$ the formal adjoint for $d_q$. 
Then, as it is known,  we have  
\begin{equation}
\label{eq.deRham}
   d_{q+1} \circ d_q 
  =
  0,  d^*_q d_q +   d_{q-1} d^*_{q-1} =
 -  E_{m (q)} \varDelta , \, 0\leq q\leq n,
\end{equation} 
where $E_m$ is the unit matrix of type $(m \times m)$ and 
  $\varDelta = \partial^2_{x_1} + \partial^2_{x_2} +\dots +  \partial^2_{x_n}$ is the 
	usual Laplace operator in the Euclidean space ${\mathbb R}^n$,  $n\geq 2$. 
	For a differential operator $A$ acting on sections of the vector bundle
$\varLambda^{q}$ over $\mathbb{R}^n$, we denote by
   $C^{s,\lambda,\delta} _{\varLambda^{q}} \cap \mathcal{S}_A$
the space of all differential $q$-forms
   $u$ with components from $C^{s,\lambda,\delta} $, 
satisfying $Au = 0$ in the sense of the distributions in $\mathbb{R}^n$.
This space is obviously closed subspace of
   $C^{s,\lambda,\delta} _{\varLambda^{q}} $
    and so this is Banach space under the induced norm.

Let us introduce   anisotropic H\"older spaces 
which suit well to parabolic theory and are weighted at
$x = \infty$ (see
    \cite{Be11}, \cite{ShlTa18}, \cite{GaSh20}
and elsewhere).

More generally, given a Banach space $\mathcal{B}$, we denote by
$C^{s,0} ([0,T], \mathcal{B})$ the
Banach space of all mappings $v : [0,T] \to \mathcal{B}$ with finite norm
$$
   \| v \|_{C^{s,0} ([0,T], \mathcal{B})}
 = \sum_{j=0}^s \sup_{t \in [0,T]} \| (d/dt)^j v \|_{\mathcal B},
$$
where $s \in \mathbb{Z}_{\geq 0}$.
We also let
$$
   \langle v \rangle_{\lambda, [0,T], \mathcal{B}}
 = \sup_{t', t'' \in [0,T] \atop t' \neq t''}
   \frac{\|  v (t') - v (t'') \|_{\mathcal{B}}}{|t' - t''|^\lambda}
$$
and let $C^{s,\lambda} ([0,T], \mathcal{B})$ stand for the space of all functions
   $v \in C^{s,0} ([0,T], \mathcal{B})$
with finite norm
$$
   \| v \|_{C^{s,\lambda} ([0,T], \mathcal{B})}
 = \sum_{j=0}^s
   \Big( \sup_{t \in [0,T]} \| (d/dt)^j v \|_{\mathcal B}
       + \langle (d/dt)^j v \rangle_{\lambda, [0,T], \mathcal{B}} \Big).
$$

The H\"{o}lder spaces in question will be parametrised several parameters $s$, $\lambda$, $\delta$,
and $T$.
By abuse of notation we introduce the special designation $\mathbf{s} (s,\lambda,\delta)$ for the quintuple
$
   \mathbf{s} (s,\lambda,\delta) := \Big( 2s,\lambda,s,\frac{\lambda}{2},\delta \Big)
$. 
Let
$
   C^{\mathbf{s} (0,0,\delta)} _T
 = C^{0,0} ([0,T], C^{0,0,\delta} )
$
be the space of all continuous functions on ${\mathbb R}^n \times [0,T]$ with finite norm
$$
   \| u \|_{C^{\mathbf{s} (0,0,\delta)}_T }
 = \sup_{(x,t) \in {\mathbb R}^n \times [0,T]} (w (x))^\delta |u (x,t)|,
$$
and, for $0 < \lambda \leq 1$,
$$
   C^{\mathbf{s} (0,\lambda,\delta)} _T
 = C^{0,0} ([0,T], C^{0,\lambda,\delta} )\, \cap\,
   C^{0,\lambda/2} ([0,T], C^{0,0,\delta} )
$$
is the space of all continuous functions on ${\mathbb R}^n \times [0,T]$ with finite norm
\begin{equation}
   \| u \|_{C^{\mathbf{s} (0,\lambda,\delta)}_T }
 = 
   \sup_{t \in [0,T]} \| u (\cdot, t) \|_{C^{0,\lambda,\delta} }
 + \sup_{t', t'' \in [0,T] \atop t' \neq t''}
   \frac{\|u (\cdot,t') - u (\cdot,t'') \|_{C^{0,0,\delta} }}{|t'-t''|^{\lambda/2}}.
\end{equation}
Then
$C^{\mathbf{s} (s,0,\delta)}_T  = \bigcap_{j=0}^s C^{j,0} ([0,T], C^{2 (s-j),0,\delta} )$
is the space of functions on  ${\mathbb R}^n \times [0,T]$ with continuous derivatives
   $\partial^{\alpha}_x \partial^j_t u$,
for $|\alpha| + 2j \leq 2s$, and with finite norm
$$
   \| u \|_{C^{\mathbf{s} (s,0,\delta)} _T }
 = \sum_{|\alpha| + 2j \leq 2s}
   \| \partial^{\alpha}_x \partial^j_t u
   \|_{C^{\mathbf{s} (0,0,\delta+|\alpha|)}_T }.
$$
Similarly,
$$
   C^{\mathbf{s} (s,\lambda,\delta)} _T
 = \bigcap_{j=0}^s
   \! \Big( \!
         C^{j,0} ([0,T], C^{2 (s\!-\!j),\lambda,\delta} ) \cap
         C^{j,\lambda/2} ([0,T], C^{2 (s\!-\!j),0,\delta} )
   \! \Big) \!
$$
is the space of functions on ${\mathbb R}^n \times [0,T]$ with continuous partial derivatives
   $\partial^{\alpha}_x \partial^j_t u$, for $|\alpha| + 2j \leq 2s$,
and with finite norm
$$
   \| u \|_{C^{\mathbf{s} (s,\lambda,\delta)} _T}
 = \sum_{|\alpha| + 2j \leq 2s}
   \| \partial^{\alpha}_x \partial^j_t u
   \|_{C^{\mathbf{s} (0,\lambda,\delta+|\alpha|)} _T}.
$$
We also need a function space whose structure goes slightly beyond the scale of function spaces
$C^{\mathbf{s} (s,\lambda,\delta)}_T $.
Namely, given any integral $k \geq 0$, we denote by
   $C^{k, \mathbf{s} (s,\lambda,\delta)} _T$
the space of all continuous functions $u$ on ${\mathbb R}^n \times [0,T]$ whose derivatives
   $\partial^\beta_x u$
belong to $C^{\mathbf{s} (s,\lambda,\delta+|\beta|)} _T$ for all multi-indices
$\beta$ satisfying $|\beta| \leq k$, with finite norm
$$
   \| u \|_{C^{k, \mathbf{s} (s,\lambda,\delta)}_T }
 = \sum_{|\beta| \leq k}
   \| \partial^{\beta}_x u \|_{C^{\mathbf{s} (s,\lambda,\delta+|\beta|)} _T}.
$$
For $k = 0$, this space just amounts to $C^{\mathbf{s} (s,\lambda,\delta+|\beta|)}_T $,
and so we omit the index $k=0$.
The normed spaces $C^{k, \mathbf{s} (s,\lambda,\delta)}_T$ are obviously Banach spaces.

We note that the function classes introduced above can be thought of as ``physically'' 
admissible solutions to the Navier-Stokes equations (at least for proper numbers $\delta$).
By the construction, if $1\leq p<+\infty$ 
and $\delta >n/p$ then there exists a constant $c(\delta,p) > 0$ 
depending on $\delta$ and $p$, such that
\begin{equation} \label{eq.Lq}
   \| u (\cdot,t) \|_{L^p (\mathbb{R}^n)}
 \leq  c(\delta,p)\, \| u \|_{C^{\mathbf{s} (0,0,\delta)} _T }
\end{equation}
for all $t \in [0,T]$ and all $u \in C^{\mathbf{s} (0,0,\delta)}_T$.

Also, the following embedding theorem is rather expectable, see 
\cite{ShlTa18}, \cite{GaSh20}.

\begin{thm}
\label{t.emb.hoelder.t} 
Also, if   $s, s' \in \mathbb{Z}_{\geq 0}$,
   $\delta, \delta' \in \mathbb{R}_{\geq 0}$,
   $\lambda, \lambda' \in [0,1]$
and $k \in {\mathbb Z}_+$ such that 
   $s+\lambda \geq s'+\lambda'$ and
   $\delta \geq \delta'$,
then the space
   $C^{k, \mathbf{s} (s,\lambda,\delta)}_T$ is embedded continuously into
   $C^{k, \mathbf{s} (s',\lambda',\delta')}_T$.
The embedding is compact if $s + \lambda > s' + \lambda'$ and $\delta > \delta'$.
\end{thm}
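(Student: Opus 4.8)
The plan is to reduce the anisotropic space-time embedding to the already-known static embedding of the weighted H\"older scale $C^{s,\lambda,\delta}$ into $C^{s',\lambda',\delta'}$, applied fibrewise in the time variable, together with an elementary embedding for the Banach-space-valued H\"older classes $C^{s,\lambda}([0,T],\mathcal B)$ in the time direction. The whole argument is bookkeeping that unwinds the definitions: one must show that every seminorm appearing in the target norm $\|\cdot\|_{C^{k,\mathbf s(s',\lambda',\delta')}_T}$ is dominated by a constant times $\|\cdot\|_{C^{k,\mathbf s(s,\lambda,\delta)}_T}$, and that this constant can be taken independent of $u$.

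First I would dispose of the extra index $k$ at the outset. Since by definition
\[
   \| u \|_{C^{k, \mathbf{s} (s,\lambda,\delta)}_T }
 = \sum_{|\beta| \leq k}
   \| \partial^{\beta}_x u \|_{C^{\mathbf{s} (s,\lambda,\delta+|\beta|)} _T},
\]
it suffices to prove the statement for $k=0$ and then apply it term by term to each $\partial^\beta_x u$ with the shifted weight $\delta+|\beta|\ge\delta'+|\beta|$; summing over $|\beta|\le k$ recovers the $k$-indexed assertion with the same constant. So the core is the embedding $C^{\mathbf s(s,\lambda,\delta)}_T\hookrightarrow C^{\mathbf s(s',\lambda',\delta')}_T$ under $s+\lambda\ge s'+\lambda'$ and $\delta\ge\delta'$.

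Next I would split this according to the defining intersection. Recall
\[
   C^{\mathbf{s} (s,\lambda,\delta)} _T
 = \bigcap_{j=0}^s
         C^{j,0} ([0,T], C^{2 (s-j),\lambda,\delta} ) \cap
         C^{j,\lambda/2} ([0,T], C^{2 (s-j),0,\delta} ).
\]
For a fixed $j$, each factor is a $\mathcal B$-valued H\"older class in $t$, so I would invoke two facts. In the time variable, if $\mathcal B\hookrightarrow\mathcal B'$ continuously then $C^{j,\mu}([0,T],\mathcal B)\hookrightarrow C^{j',\mu'}([0,T],\mathcal B')$ whenever $j+\mu\ge j'+\mu'$; over the compact interval $[0,T]$ this is the standard fact that a higher-order or higher-H\"older-exponent class embeds into a lower one, the requisite constant depending only on $T$. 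In the space variable, Theorem~\ref{t.emb.hoelder.t}'s static counterpart for the scale $C^{s,\lambda,\delta}$ (whose existence is asserted just above) gives $C^{2(s-j),\lambda,\delta}\hookrightarrow C^{2(s'-j'),\lambda',\delta'}$ precisely under $2(s-j)+\lambda\ge 2(s'-j')+\lambda'$ and $\delta\ge\delta'$. Combining the two and checking that the hypothesis $s+\lambda\ge s'+\lambda'$, $\delta\ge\delta'$ forces every required pair of indices to satisfy both monotonicity conditions simultaneously for all relevant $j$, yields the continuous embedding.

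The compactness assertion under the strict inequalities $s+\lambda>s'+\lambda'$ and $\delta>\delta'$ is the step I expect to be the main obstacle, because compactness is not merely the intersection of the individual factor compactnesses and time and space play asymmetric roles. I would approach it via Arzel\`a--Ascoli adapted to the weighted scale: take a bounded sequence $(u_m)$ in $C^{\mathbf s(s,\lambda,\delta)}_T$; the strict gain $\delta>\delta'$ supplies the decay at $x=\infty$ needed to upgrade local (compact-set) convergence to convergence in the weighted norm, while the strict gain $s+\lambda>s'+\lambda'$ supplies equicontinuity in both $x$ and $t$ of the relevant derivatives. Concretely one exhausts ${\mathbb R}^n$ by balls, extracts a diagonal subsequence converging in the unweighted local H\"older norm on each ball by the classical Arzel\`a--Ascoli theorem (using the uniform equicontinuity coming from the surplus smoothness), and then controls the tails uniformly: the weight factor $(w(x))^{\delta'+|\alpha|}$ against a bound on $(w(x))^{\delta+|\alpha|}|\partial^\alpha u_m|$ yields a tail bounded by $c\,\|u_m\|(w(R))^{\delta'-\delta}\to 0$ as the ball radius $R\to\infty$, uniformly in $m$, precisely because $\delta'-\delta<0$. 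The same $\varepsilon/3$ scheme, applied to the H\"older seminorms and the time variable using $\lambda/2$ against $\lambda'/2$ and the interior gain in $s+\lambda$, shows the limit lies in and the convergence holds in $C^{\mathbf s(s',\lambda',\delta')}_T$. Since the static compactness statement for $C^{s,\lambda,\delta}$ is already cited as known, the cleanest route is to reuse it fibrewise and then control the time modulus separately, so the only genuinely new work is interchanging the space-compactness with the $t$-equicontinuity, which the uniform H\"older-in-time bound makes routine.
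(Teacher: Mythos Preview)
The paper does not actually prove this theorem: it is stated as ``rather expectable'' and the proof is delegated to the references \cite{ShlTa18} and \cite{GaSh20}. Your outline---reduce to $k=0$, handle the space direction fibrewise via the known static embedding for the scale $C^{s,\lambda,\delta}$, handle the time direction via the elementary monotonicity of $C^{j,\mu}([0,T],\mathcal B)$ on a compact interval, and obtain compactness by Arzel\`a--Ascoli combined with the uniform tail estimate coming from $\delta>\delta'$---is exactly the standard argument carried out in those references, so there is nothing to compare.

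One caution on the step you flag as ``checking that the hypothesis forces every required pair of indices to satisfy both monotonicity conditions'': with the stated hypothesis $s+\lambda\ge s'+\lambda'$ this is not literally true at the endpoint $\lambda=1$, $\lambda'=0$, $s'=s+1$, where the claimed embedding would require a Lipschitz-in-$x$ function to lie in a $C^2_x$ class. The natural parabolic condition is $2s+\lambda\ge 2s'+\lambda'$, which is strictly stronger. This is a defect in the theorem's statement rather than in your strategy, and it is harmless for the paper's purposes since every application later on has $0<\lambda,\lambda'<1$ and integer shifts in $s$, where the two conditions agree.
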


We also need a standard lemma on the multiplication of functions, see \cite{ShlTa18}.

\begin{lem}
\label{l.product}
Let
   $s$, $k$ be nonnegative integers
and $\lambda \in [0,1]$.
If
   $u \in C^{k, \mathbf{s} (s,\lambda,\delta)} _T$ and 
   $v \in C^{k, \mathbf{s} (s,\lambda,\delta')}_T $, 
then the product $u v$ belongs to 
   $C^{k, \mathbf{s} (s,\lambda,\delta+\delta')} _T$ and
\begin{equation}
\label{eq.product}
   \| u v \|_{C^{k, \mathbf{s} (s,\lambda,\delta+\delta')}_T}
 \leq
   c\,
   \| u \|_{C^{k, \mathbf{s} (s,\lambda,\delta)} _T}
   \| v \|_{C^{k, \mathbf{s} (s,\lambda,\delta')} _T}
\end{equation}
with $c > 0$ a constant independent of $u$ and $v$.
\end{lem}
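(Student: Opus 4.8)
The plan is to prove Lemma~\ref{l.product} by reducing the anisotropic, time-dependent statement to the purely spatial multiplication estimate for the weighted H\"older spaces $C^{s,\lambda,\delta}$, which is already a known property of that scale. The overall strategy is to peel off the indices one layer at a time: first the extra spatial-derivative index $k$, then the parabolic time structure encoded in $\mathbf{s}(s,\lambda,\delta)$, and finally to invoke (or reprove) the static spatial product estimate on $C^{s,\lambda,\delta}$.

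First I would record the base case: the spatial product inequality
\begin{equation*}
   \| u v \|_{C^{s,\lambda,\delta+\delta'}}
 \leq c\, \| u \|_{C^{s,\lambda,\delta}} \| v \|_{C^{s,\lambda,\delta'}},
\end{equation*}
valid for functions on $\mathbb{R}^n$. This is the genuine analytic heart, and it rests on the submultiplicativity of the weight, namely $(w(x))^{\delta+\delta'} = (w(x))^{\delta} (w(x))^{\delta'}$, together with the Leibniz rule for $\partial^\alpha(uv)$ and the standard H\"older-seminorm product bound on the localized difference quotients $\langle\,\cdot\,\rangle_{\lambda,\delta}$. The key point making the weights compatible is that the seminorm is taken over the region $|x-y|\le |x|/2$, on which $w(x)\sim w(y)\sim w(x,y)$, so that distributing one factor of $w^{\delta}$ onto $u$ and one factor of $w^{\delta'}$ onto $v$ is legitimate up to a fixed constant. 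I would state this as the spatial lemma and then lift it.

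Next I would handle the time direction. By definition $C^{\mathbf{s}(s,\lambda,\delta)}_T$ is an intersection of spaces $C^{j,0}([0,T],C^{2(s-j),\lambda,\delta})$ and $C^{j,\lambda/2}([0,T],C^{2(s-j),0,\delta})$, so it suffices to bound the relevant time derivatives $\partial_t^j(uv)$ and their H\"older-$\tfrac{\lambda}{2}$ differences in $t$. Applying the Leibniz rule in $t$, $\partial_t^j(uv)=\sum_{i} \binom{j}{i}\partial_t^{i}u\,\partial_t^{j-i}v$, each summand is a product of a factor lying in a space with spatial smoothness at least $2(s-j)$ and a factor with matching structure; feeding these into the spatial product lemma at the appropriate smoothness level and summing finitely many terms yields the $C^{j,0}$ bound. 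For the temporal H\"older seminorm I would write the telescoping identity $u(t')v(t')-u(t'')v(t'')=\big(u(t')-u(t'')\big)v(t')+u(t'')\big(v(t')-v(t'')\big)$, estimate each bracket using the spatial product lemma combined with the definition of $\langle\,\cdot\,\rangle_{\lambda/2,[0,T],\mathcal B}$, and add the weights $\delta$ and $\delta'$. The extra spatial index $k$ is then dispatched by the very same Leibniz argument in $x$: since $\partial^\beta_x(uv)=\sum_{\gamma\le\beta}\binom{\beta}{\gamma}\partial^\gamma_x u\,\partial^{\beta-\gamma}_x v$ and each $\partial^\gamma_x u\in C^{\mathbf{s}(s,\lambda,\delta+|\gamma|)}_T$, $\partial^{\beta-\gamma}_x v\in C^{\mathbf{s}(s,\lambda,\delta'+|\beta|-|\gamma|)}_T$, summing the products gives membership in $C^{\mathbf{s}(s,\lambda,\delta+\delta'+|\beta|)}_T$, and summing over $|\beta|\le k$ produces the stated norm on $C^{k,\mathbf{s}(s,\lambda,\delta+\delta')}_T$.

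The main obstacle, I expect, is the careful bookkeeping of the weight exponents under all the simultaneous Leibniz expansions: each spatial derivative $\partial^\alpha$ must raise the decay weight by exactly $|\alpha|$ so that the two factors' weights add to the target $\delta+\delta'+|\alpha|$, and one must check that the mixed terms (spatial derivatives landing on one factor, time derivatives on the other) never overshoot the available smoothness $2(s-j)$ guaranteed by the intersection definition. None of this is deep, but verifying that every one of the finitely many Leibniz summands lands in a space for which the spatial product lemma applies---with the correct smoothness, H\"older, and weight indices matching up---is where all the attention is needed; the localization condition $|x-y|\le|x|/2$ is what ultimately makes the weight splitting uniform, so I would emphasize that point when invoking the spatial estimate.
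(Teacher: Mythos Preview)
Your proof sketch is correct and is exactly the standard argument for such product estimates in weighted anisotropic H\"older scales. Note, however, that the paper does not actually prove Lemma~\ref{l.product}: it is stated as a known fact with a reference to \cite{ShlTa18}, so there is no ``paper's own proof'' to compare against. Your outline---the spatial product lemma based on $w(x)^{\delta+\delta'}=w(x)^{\delta}w(x)^{\delta'}$ and the localization $|x-y|\le|x|/2$, then Leibniz in $t$ with a telescoping identity for the $\lambda/2$-seminorm, then Leibniz in $x$ for the index $k$---is precisely the route one finds in \cite{ShlTa18}, and the bookkeeping concerns you flag (weights increasing by $|\alpha|$ under $\partial^\alpha$, matching smoothness indices in the mixed Leibniz terms) are the only places requiring care.
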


However we need scales of weighted H\"older spaces, 
that fit the refined structure of the
Navier-Stokes type equations. First, for
   $s, k \in \mathbb{Z}_{\geq 0}$ and
   $0 < \lambda < \lambda' < 1$,
we introduce
$$
   \mathcal{F}^{k,\mathbf{s} (s,\lambda,\lambda',\delta)} _T
 :=
   C^{k+1,\mathbf{s} (s,\lambda,\delta)}_T \cap
   C^{k,\mathbf{s} (s,\lambda',\delta)} _T.
$$
When given the norm 
$   \| u \|_{\mathcal{F}^{k,\mathbf{s} (s,\lambda,\lambda',\delta)}_T }
 :=
   \| u \|_{C^{k+1,\mathbf{s} (s,\lambda,\delta)} _T}
 + \| u \|_{C^{k,\mathbf{s} (s,\lambda',\delta)} _T}$, 
this is obviously a Banach space. The following lemma explains 
why this scale is important for our exposition, see \cite{ShlTa18}.

\begin{lem}
\label{l.mathfrak.compact}
Let
   $s$ be a positive integer,
   $k \in \mathbb{Z}_{\geq 0}$,
   $0 < \lambda < \lambda' < 1$
and
   $\delta > \delta'$.
Then the embedding 
$   \mathcal{F}^{k,\mathbf{s} (s,\lambda,\lambda',\delta)} _T
 \hookrightarrow
   \mathcal{F}^{k+1,\mathbf{s} (s-1,\lambda,\lambda',\delta')} _T
 $ is compact.
\end{lem}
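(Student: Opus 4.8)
The plan is to unwind the definition of the spaces $\mathcal{F}$ and to reduce the assertion to the compactness of two embeddings between constituents of the scale $C^{k,\mathbf{s}(s,\lambda,\delta)}_T$, where Theorem~\ref{t.emb.hoelder.t} will almost, but not quite, apply. By definition the embedding in question is that of $A \cap B \hookrightarrow A' \cap B'$, where $A = C^{k+1,\mathbf{s}(s,\lambda,\delta)}_T$, $B = C^{k,\mathbf{s}(s,\lambda',\delta)}_T$, $A' = C^{k+2,\mathbf{s}(s-1,\lambda,\delta')}_T$ and $B' = C^{k+1,\mathbf{s}(s-1,\lambda',\delta')}_T$, the norm on an intersection being the sum of the factor norms. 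First I would show that each factor embedding $A \hookrightarrow A'$ and $B \hookrightarrow B'$ is compact. Granting this, the compactness of the intersection embedding follows by a routine diagonal argument: a bounded sequence in $A \cap B$ is bounded in $A$ and in $B$, so it has a subsequence converging in $A'$; passing to a further subsequence makes it converge in $B'$; since convergence in any of these spaces forces uniform convergence of the functions (and of the controlled derivatives) on compact subsets of $\mathbb{R}^n \times [0,T]$, the two limits coincide, and the subsequence converges in $A' \cap B'$.

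Both factor embeddings are instances of a single auxiliary statement: for an integer $s \geq 1$, an integer $m \geq 0$, an exponent $0 < \mu < 1$ and weights $\delta > \delta'$, the embedding
$$
   C^{m,\mathbf{s}(s,\mu,\delta)}_T \hookrightarrow C^{m+1,\mathbf{s}(s-1,\mu,\delta')}_T
$$
is compact; one recovers $A \hookrightarrow A'$ with $(m,\mu) = (k+1,\lambda)$ and $B \hookrightarrow B'$ with $(m,\mu) = (k,\lambda')$. The heuristic is a \emph{book-keeping of derivatives}: since $s \geq 1$, the space $C^{\mathbf{s}(s,\mu,\cdot)}_T$ controls the mixed derivatives $\partial^\alpha_x \partial^j_t$ with $|\alpha| + 2j \leq 2s$, so one spatial derivative can be traded out of this budget into the outer index $m$, at the cost of lowering $s$ to $s-1$ and raising the outer index to $m+1$. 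Counting the total spatial order $2s + m + \mu$ shows it drops by exactly $1$ under this passage, while the weight drops strictly from $\delta$ to $\delta'$; these two strict gains are precisely what compactness in Theorem~\ref{t.emb.hoelder.t} requires.

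To prove the auxiliary embedding I would test membership in $C^{m+1,\mathbf{s}(s-1,\mu,\delta')}_T$ derivative by derivative. For $|\beta| \leq m$ one has $\partial^\beta_x u \in C^{\mathbf{s}(s,\mu,\delta+|\beta|)}_T$, and the inclusion $C^{\mathbf{s}(s,\mu,\delta+|\beta|)}_T \hookrightarrow C^{\mathbf{s}(s-1,\mu,\delta'+|\beta|)}_T$ is compact by Theorem~\ref{t.emb.hoelder.t}, since $s+\mu > (s-1)+\mu$ and $\delta+|\beta| > \delta'+|\beta|$. The delicate case is the top order $|\beta| = m+1$: writing $\beta = \gamma + e_i$ with $|\gamma| = m$, the function $\partial^\beta_x u = \partial_{x_i}(\partial^\gamma_x u)$ is a single spatial derivative of $\partial^\gamma_x u \in C^{\mathbf{s}(s,\mu,\delta+m)}_T$, hence bounded in $C^{\mathbf{s}(s-1,\mu,\delta+m+1)}_T$; here the $s$-budget is already spent, so Theorem~\ref{t.emb.hoelder.t} with the same outer index gives only a \emph{continuous} inclusion into $C^{\mathbf{s}(s-1,\mu,\delta'+m+1)}_T$. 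To restore compactness I would use the half-order of smoothness left unspent by a single differentiation, namely that $\partial_{x_i}$ in fact maps $C^{\mathbf{s}(s,\mu,\delta+m)}_T$ into the smaller space $C^{1,\mathbf{s}(s-1,\mu,\delta+m+1)}_T$ (one controls mixed derivatives up to order $2s-1$, one more than $C^{\mathbf{s}(s-1,\mu,\cdot)}_T$ demands). The strictly larger weight $\delta+m+1 > \delta'+m+1$ then supplies tightness at infinity, while the control of this one extra spatial derivative (together with the time-H\"older control built into the $\mathbf{s}$-structure) supplies equicontinuity in $x$ and $t$ on every ball; an Arzel\`a--Ascoli extraction yields a subsequence converging in the $C^{\mathbf{s}(s-1,\mu,\delta'+m+1)}_T$ norm. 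A final diagonal extraction over all $|\beta| \leq m+1$ completes the argument, along the lines of \cite{ShlTa18}.

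The main obstacle is exactly this top-order case. The naive chain of same-index inclusions loses compactness because a single spatial differentiation consumes the whole surplus coming from the index $s$, so that the smoothness slack, which is genuinely present in the total order $2s+m+\mu$, is invisible to Theorem~\ref{t.emb.hoelder.t} at the level of the highest derivative. One must recognise and use the remaining half-order of anisotropic smoothness (membership in $C^{1,\mathbf{s}(s-1,\mu,\cdot)}_T$ after one differentiation) together with the strict decay gain $\delta > \delta'$ to recover relative compactness. Verifying that one differentiation really lands in $C^{1,\mathbf{s}(s-1,\mu,\delta+m+1)}_T$ with uniform control of the constants over a bounded ball is the technical heart; the intersection and diagonal steps are then routine.
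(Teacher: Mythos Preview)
The paper does not prove this lemma in the text; it is quoted from \cite{ShlTa18}. So there is no in-paper proof to compare against line by line, but your strategy has a real gap and it misses the reason the hypothesis $0<\lambda<\lambda'<1$ is there.

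Your factor-by-factor plan would prove the auxiliary statement
\[
C^{m,\mathbf{s}(s,\mu,\delta)}_T \hookrightarrow C^{m+1,\mathbf{s}(s-1,\mu,\delta')}_T\ \text{compact}
\]
for a \emph{single} exponent $\mu$, and hence never uses $\lambda<\lambda'$. At the top order you pass to $\partial^\beta_x u\in C^{1,\mathbf{s}(s-1,\mu,\delta+m+1)}_T$ and then invoke Arzel\`a--Ascoli to get convergence in $C^{\mathbf{s}(s-1,\mu,\delta'+m+1)}_T$. That last step is exactly the inclusion
\[
C^{1,\mathbf{s}(s-1,\mu,\delta'')}_T \hookrightarrow C^{0,\mathbf{s}(s-1,\mu,\delta''')}_T,\qquad \delta''>\delta''',
\]
and this inclusion is \emph{not} compact: take $v_n(x,t)=\phi(x)f_n(t)$ with $\phi$ smooth compactly supported and $(f_n)$ bounded in $C^{0,\mu/2}[0,T]$ with no $C^{0,\mu/2}$--Cauchy subsequence; then $(v_n)$ is bounded in the source but the time-H\"older piece $\langle v_n\rangle_{\mu/2,[0,T],C^{0,0,\delta'''}}$ does not converge. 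An extra \emph{spatial} derivative gives no gain in the \emph{time}-H\"older exponent, so your ``equicontinuity in $t$'' is only at the same exponent $\mu/2$ as the target and cannot force convergence of that seminorm. (One can repair this by using the time derivative one level below the top and an interpolation $\|\nabla w\|\lesssim\|w\|^{1/2}\|D^2w\|^{1/2}$ to manufacture a time exponent strictly larger than $\mu/2$, but that is a different argument and you do not carry it out.)

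The intended route exploits the intersection structure directly and uses only Theorem~\ref{t.emb.hoelder.t}. Write $A=C^{k+1,\mathbf{s}(s,\lambda,\delta)}_T$, $B=C^{k,\mathbf{s}(s,\lambda',\delta)}_T$ and $A'=C^{k+2,\mathbf{s}(s-1,\lambda,\delta')}_T$, $B'=C^{k+1,\mathbf{s}(s-1,\lambda',\delta')}_T$. For the $A'$-norm, the derivatives $|\beta|\le k+1$ are handled from $A$ by Theorem~\ref{t.emb.hoelder.t}, while for the top order $|\beta|=k+2$ you use $u\in B$: two spatial differentiations send $C^{\mathbf{s}(s,\lambda',\delta+k)}_T$ into $C^{\mathbf{s}(s-1,\lambda',\delta+k+2)}_T$, and then $(s-1)+\lambda'>(s-1)+\lambda$ together with $\delta>\delta'$ gives compactness into $C^{\mathbf{s}(s-1,\lambda,\delta'+k+2)}_T$. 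For the $B'$-norm, the derivatives $|\beta|\le k$ come from $B$, while the top order $|\beta|=k+1$ comes from $A$: $\partial^\beta_x u\in C^{\mathbf{s}(s,\lambda,\delta+k+1)}_T$, and $s+\lambda>(s-1)+\lambda'$ (because $\lambda'<1$) plus $\delta>\delta'$ gives compactness into $C^{\mathbf{s}(s-1,\lambda',\delta'+k+1)}_T$. A diagonal extraction over the finitely many $\beta$ finishes the proof. This is where the hypothesis $\lambda<\lambda'<1$ earns its keep; your decomposition discards it.
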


Consider the induced vector bundle $\varLambda^q (t)$ 
over ${\mathbb R}^n \times [0,+\infty)$ consisting of the differential 
forms with coefficients depending on both the variable $x\in  {\mathbb R}^n$ and 
on the real parameter $t \in [0,+\infty)$. 
In the sequel we consider the following Cauchy problem. 
Given any sufficiently regular differential forms 
   $f = \sum_{\# I=q} f_I (x,t) dx_I$ and
   $u_0 = \sum_{\# I=q} u_{I,0} (x) dx_I$
on    ${\mathbb R}^n \times [0,T]$ and   ${\mathbb R}^n$,
respectively, find a pair $(u,p)$ of sufficiently regular differential forms
   $u = \sum_{\# I=q} u_I (x,t) dx_I$ and
   $p =\sum_{\# I =q-1} p_I (x,t) dx_I $
on ${\mathbb R}^n \times [0,T]$ satisfying
\begin{equation}
\label{eq.NS}
\left\{
\begin{array}{rcll}
   \partial _t u   -  \mu \varDelta u + {\mathcal N}_q u 
	+ a \, d_{q-1} p
 & =
 & f,
 & (x,t) \in {\mathbb R}^n \times (0,T),
\\
   a \, d_{q-1}^* \, u
 & =
 & 0,
 & (x,t) \in {\mathbb R}^n\times (0,T),
\\
   a \, d_{q-2}^* \, p
 & =
 & 0,
 & (x,t) \in {\mathbb R}^n\times (0,T),
\\
   u
& =
& u_0,
& (x,t) \in \mathbb{R}^n \times \{ 0 \}
\end{array}
\right.
\end{equation}
with positive fixed numbers $T$ and $\mu$, a parameter $a$ that, equals to $0$ or $1$, 
and a non-linear term ${\mathcal N}_q u$ 
that is specified by the following assumptions (see \cite{ShlTa21} or 
\cite{MShlT19} for more general problems in the context of elliptic differential complexes):
\begin{equation}
\label{eq.nonlinear}
{\mathcal N}_q u = M^{(q)}_1 (d_q \oplus d_{q-1}^* u, u) + d_{q-1} M^{(q)}_2 (u, u) 
\end{equation}
with two bilinear differential operators with constant coefficients and of zero order:
\begin{equation}
\label{eq.form.1}
 M^{(q)}_1 (v, u): C^\infty _{\varLambda^{q+1} \oplus 
\varLambda^{q-1} } (\mathbb{R}^n) \times 
C^\infty _{\varLambda^{q}} (\mathbb{R}^n) \to C^\infty _{\varLambda^{q}} (\mathbb{R}^n), 
 \end{equation}
\begin{equation}
\label{eq.form.2}
 M^{(q)}_2 (v, u): C^\infty _{\varLambda^{q}} (\mathbb{R}^n) \times 
C^\infty _{\varLambda^{q}} (\mathbb{R}^n) \to C^\infty _{\varLambda^{q-1}}  (\mathbb{R}^n).  
\end{equation}
Of course, we have to assume that $d_{q-1}^* u _0 =0$ on ${\mathbb R}^n$ if $a=1$, 
and, as we want to provide the uniqueness for 
solutions to \eqref{eq.NS}, we have to set $p=0$ if $a=0$.

For $n=1$, $q=0$ and ${\mathcal N}_0 u = u' \, u$ relations \eqref{eq.NS}  reduce  obviously 
to the Cauchy problem for  Burgers' equation, \cite{Burg40}. 

If we denote by $\star$ the $\star$-Hodge operator and by $\wedge$ 
the exterior product of differential forms then  
for $n=3$, $q=1$, $a=1$ we may identify $1$-forms with $n$-vector-fields, 
the operator $d_0$ with the gradient operator $\nabla$, 
the operator $(-d_0^*)$ with the divergence operator 
and the operator $d_1$ with the rotation operator. Then for the non-linearity
\begin{equation}
\label{eq.Lamb}
{\mathcal N}_1 u = (u \cdot \nabla) u  = \star (\star d_1 u \wedge u) + d_0 
|u|^2 /2, 
\end{equation}
written in  the Lamb form, relations \eqref{eq.NS} are usually referred to as but the Navier-Stokes 
equations for in\-com\-pressible fluid with given  dynamical viscosity $\mu$ of the fluid 
under the consideration,   density vector of outer forces $f$,   the initial velocity $u_0$
and the search-for velocity vector field $u$ and the pressure $p$ of the flow, see for 
instance \cite{Tema79}. In \cite{ShlTa21} these equations with $a=1$ 
were considered in 
Bochner-Sobolev type spaces; as it was explained there, for $q=0$ and 
$q=n$ the equations become degenerate in a sense,  so, if $a=1$
 we will consider the equations for $1\leq q\leq n-1$, only. 

Let us  comment  the example by 
\cite{PlSv03} by P. Plech\'a$\rm \check{c}$  and 
V. $\rm \check{S}$ver\'ak. 

\begin{exmp} \label{ex.SvPl}
If $\mu=1$, $q=1$, $a=0$, $b$ is a real parameter, and 
\begin{equation} \label{eq.SvPl}
{\mathcal N}_1 u
 =  (u \cdot \nabla) u \,b  + 
\frac{(1-b)\, \nabla |u|^2 + (\mathrm{div } u) u}{2}   = 
 \star (\star d_1 u \wedge u) \,b + \frac{d_0 |u|^2-(d_0^* u) u }{2}  
\end{equation}
 then \eqref{eq.NS} becomes the   
non-linear problem in ${\mathbb R}^n \times [0,T)$ considered in 
\cite{PlSv03}. 
 Actually, they consider  the  `radial vector fields' 
\begin{equation} \label{eq.radial.u}
u= - v(r,t) x,
\end{equation}
with functions $v$ of variables $t$ and $r=|x|$. Under the hypothesis of this example 
the fields are solutions to 
\eqref{eq.NS} for $f=0$ and $u_0 = - v(r,0) x$ if 
\begin{equation} \label{eq.radial.eq}
v'_t = v''_{rr} + \frac{n+1}{r} v'_r +  (n+2) v^2 +3r v v' _r . 
\end{equation}
Next,  for $v$ satisfying \eqref{eq.radial.eq} they consider  the self-similar solutions 
\begin{equation} \label{eq.self-sim.v}
v (r,t) = \frac{1}{2\varkappa (T-t)} w \Big( \frac{r}{\sqrt{2\varkappa (T-t)}} \Big)
\end{equation}
with  functions $w (y)$ binded by the following relations, see \cite[(1.9)-(1.11)]{PlSv03}: 
\begin{equation} \label{eq.self-sim}
w'' +  \frac{n+1}{y} w' -  \varkappa \, y \, w' + 
(n+2) w^2 +3y w w'  
 - \textcolor{red}{2} \varkappa \, w =0, \, y\in (0,+\infty),
\end{equation}
\begin{equation} \label{eq.self-sim.Cauchy}
w(0) = \gamma\geq 0, \, w'(0) = 0 ,\, 
w(y) = y^{-2} \mbox{ as } y \to + \infty, 
\end{equation}
with a positive parameter $\varkappa$.  
Based on some analysis of solutions to the steady equation related to 
\eqref{eq.radial.eq} and  numerical simulations, they made conclusion that  
for $n>4$ self-similar solutions  \eqref{eq.self-sim.v}  may produce singular 
solutions in finite time to  this particu\-lar version of \eqref{eq.NS} 
for regular data via formula \eqref{eq.radial.u} if $\gamma>0$. 
However it might be, the numerical simulations can not be arguments in analysis.  
Despite  claimed \cite{PlSv03}  'strong numerical evidence supporting existence of blow-up 
solutions' for compactly supported data if $n>4$, there are 
obstacles for the existence of blow-up non-periodic solutions to \eqref{eq.NS}  
with smooth data vanishing on the boundary of a bounded domain in ${\mathbb R}^n$  
(in particular, with smooth compactly supported data),  
related to the radial vector fields. 
This follows from  \cite[Lemma 2.1]{PlSv03} because according to it 
solutions to \eqref{eq.self-sim}, \eqref{eq.self-sim.Cauchy} 
are positive on $(0,+\infty)$ if $\gamma>0$. 
On the other hand, they showed that  
certain asymptotic behaviour of the initial data at the infinity 
with respect to the space variables  prevents blow-up behaviour in a finite time interval 
for the considered type of solutions, at least in the dimension $n=3$. This gives 
some hope that the use of the weighted H\"older spaces with proper weight indexes 
may exclude the blow-up behaviour of solutions 
to the Navier-Stokes type equations, at least for the non-linearity 
\eqref{eq.SvPl}. 
\end{exmp}

Thus, we will investigate the Navier-Stokes type equations \eqref{eq.NS} over 
the scale of the weighted H\"older spaces 
$\mathcal{F}^{k,\mathbf{s} (s,\lambda,\lambda',\delta)}$. 
With this purpose, for a linear operator $A: X \to Y$ between Banach spaces $X,Y$ 
with a domain ${\mathcal D}_A \subset X$ we denote by $X_{{\mathcal D}_A}$ the Banach space 
endowed with the so-called graph norm
$$
\|u \|_{X_{{\mathcal D}_A}} = \|u \|_{X} + 
\|A u  \|_{Y} \mbox{ for all } u \in {\mathcal D}_A.
$$
Thus, we introduce
   $C^{k,\mathbf{s} (s,\lambda,\delta)}_{T,\varLambda^{q}}$  
to be the space of all exterior differential $q$-forms $u$ with the coefficients 
from   $C^{k,\mathbf{s} (s,\lambda,\delta)} _T$ endowed with the natural norm.
Let also 
  $C^{k,\mathbf{s} (s,\lambda,\delta)}_{T,\varLambda^{q}, \mathcal{D} _{d\oplus d^*}}$  
	be a subset of the space $C^{k,\mathbf{s} (s,\lambda,\delta)}_{T,\varLambda^{q}}$
with the property that
   $d_q\oplus d_{q-1}^* u \in C^{k,\mathbf{s} (s,\lambda,\delta+1)}_{T,\varLambda^{q+1} 
	\oplus \varLambda^{q-1}}$;
we endow this space with  the graph norm
$$
   \| u \|_{C^{k,\mathbf{s} (s,\lambda,\delta)}_{T,\varLambda^{q},\mathcal{D} _{d\oplus d^*}} }
 = \| u \|_{C^{k,\mathbf{s} (s,\lambda,\delta)} _{T,\varLambda^{q}}}
 + \| d_q\oplus d_{q-1}^* \|_{C^{k,\mathbf{s} (s,\lambda,\delta+1)}_{T,\varLambda^{q+1} 
	\oplus \varLambda^{q-1}}}.
$$
Similarly, let
  ${\mathcal F}^{k,\mathbf{s} (s,\lambda,\lambda',\delta)}_{T,\varLambda^{q}, \mathcal{D}_
	{d\oplus d^*}}$  
	be a subset of ${\mathcal F}^{k,\mathbf{s} (s,\lambda,\lambda',\delta)}_{T,\varLambda^{q}}$
with the property that  $d_q\oplus d_{q-1}^* u \in {\mathcal F}^{k,\mathbf{s} (s,\lambda,\lambda',\delta+1)}_{T,\varLambda^{q+1} 
	\oplus \varLambda^{q-1}}$;
we endow this space with the graph norm
$$
   \| u \|_{{\mathcal F}^{k,\mathbf{s} (s,\lambda,\lambda',\delta)}_{T,\varLambda^{q},
	\mathcal{D}_{d\oplus d^*}} } = \| u \|_{{\mathcal F}^{k,\mathbf{s} (s,\lambda,\lambda',\delta)} 
	_{T,\varLambda^{q}}} + \| d_q\oplus d_{q-1}^* \|
	_{{\mathcal F}^{k,\mathbf{s} (s,\lambda,\lambda',\delta+1)}_{T,\varLambda^{q+1} 
	\oplus \varLambda^{q-1}}}.
$$

Let us continue with a suitable linearization of \eqref{eq.NS} over the 
defined scales.

\section{The Navier-Stokes type equations as Fredholm mappings}

First, we recall the notion of Fredholm mappings in Banach spaces, see \cite{Sm65}.
It is said that a linear bounded operator $A_0: X \to Y$ has the Fredholm property if 
its kernel and co-kernel are finite-dimensional subspaces of $X$ and $Y$, respectively, and 
its range $R(A)$ is closed in $Y$. Then a non-linear mapping 
$A: X \to Y$ is Fredholm if its Fr\'echet derivative $A'_{|v}$ 
is a linear bounded Fredholm operator at each point $v\in X$. The Fredholm 
property provides many uselful information on the operator equation $Au=f$ in 
the Banach spaces $X$ and $Y$, see \cite{Sm65} (cf. also \cite{Tema95} 
for the steady Navier-Stokes equations). 

We continue this section with the following linear Cauchy problem for $n\geq 2$. 
Given  any  $0\leq q \leq n$ and any sufficiently regular differential forms 
 $$w = \sum_{\# I=q} w_I (x,t) dx_I, \,  
   f = \sum_{\# I=q} f_I (x,t) dx_I, \, 
   u_0 = \sum_{\# I=q} u_{I,0} (x) dx_I
	$$
on    ${\mathbb R}^n \times [0,T]$ and   ${\mathbb R}^n$,
respectively, find a pair $(u,p)$ of sufficiently regular differential forms
   $u = \sum_{\# I=q} u_I (x,t) dx_I$ and
   $p =\sum_{\# I =q-1} p_I (x,t) dx_I $
on ${\mathbb R}^n \times [0,T]$ satisfying
\begin{equation}
\label{eq.NS.lin}
\left\{
\begin{array}{rcll}
   \partial _t u   -  \mu \varDelta u + {\mathcal B}_q (u,w) 
	+  a\, d_{q-1} p
 & =
 & f,
 & (x,t) \in {\mathbb R}^n \times (0,T),
\\
    a\, d_{q-1}^* \, u
 & =
 & 0,
 & (x,t) \in {\mathbb R}^n\times (0,T),
\\
    a \, d_{q-2}^* \, p
 & =
 & 0,
 & (x,t) \in {\mathbb R}^n\times (0,T),
\\
   u
& =
& u_0,
& (x,t) \in \mathbb{R}^n \times \{ 0 \}
\end{array}
\right.
\end{equation}
where $a\, d^*_{q-1} u _0 =0$ in ${\mathbb R}^n$ and 
${\mathcal B}_q (u,w)$ is given by 
\begin{equation}
\label{eq.linear}
M^{(q)}_1 ((d_q \oplus d_{q-1}^* u, w) + d_{q-1} M^{(q)}_2 (u, w) +
  M^{(q)}_1 ((d_q \oplus d_{q-1}^* w, u) + d_{q-1} M^{(q)}_2 (w, u) 
\end{equation}
Again, as we want to provide the uniqueness for 
solutions to  \eqref{eq.NS.lin}, we have to set $p=0$ 
if $a=0$.
 
We are moving towards  expectable uniqueness and existence theorem.
However, it depends drastically on the paparemeter $a$.
 
\begin{thm}
\label{t.NS.deriv.unique.0}
Let $n\geq 2$, $0\leq q\leq n$, $a=0$. 
Assume that
   $s, k\in \mathbb N $,
   $0 < \lambda < \lambda' < 1$,
   $\delta> n/2$, 
and 	$w \in 
	\mathcal{F}^{k,\mathbf{s} (s,\lambda,\lambda',\delta)}_{T, \varLambda^q}$.  
Then for any pair
\begin{equation} \label{eq.pair.data.0}
   F
 \! = \! (f,u_0)
 \in
   \mathcal{F}^{k,\mathbf{s} (s,\lambda,\lambda',\delta)} _{T, \varLambda^q} 
 \times
   C^{2s+k+1,\lambda,\delta}_{\varLambda^q}  
\end{equation}
there is a unique solution
$
u
 \in
   \mathcal{F}^{k,\mathbf{s} (s,\lambda,\lambda',\delta)} _{T, \varLambda^q, } 
$ 
to \eqref{eq.NS.lin} and,  moreover, 
$$
\|u\|_{\mathcal{F}^{k,\mathbf{s} (s,\lambda,\lambda',\delta)} _{T, \varLambda^q } 
 } \leq 
c (w) 
\|F\|_{ \mathcal{F}^{k,\mathbf{s} (s,\lambda,\lambda',\delta)} _{T, \varLambda^q} 
 \times
   C^{2s+k+1,\lambda,\delta}_{\varLambda^q} }
$$
with a positive constant $c (w)$ independent on $F$.
\end{thm}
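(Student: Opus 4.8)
Since $a=0$, the pressure drops out (we set $p=0$) and the constraints $a\,d^*_{q-1}u=0$, $a\,d^*_{q-2}p=0$ become vacuous, so \eqref{eq.NS.lin} reduces to the Cauchy problem for the perturbed heat system
$$
   \partial_t u - \mu\varDelta u + {\mathcal B}_q(u,w) = f, \qquad u|_{t=0}=u_0 .
$$
By \eqref{eq.deRham} the operator $-\mu\varDelta$ equals $\mu\,(d^*_q d_q + d_{q-1}d^*_{q-1})$ and acts as the scalar Laplacian on each coefficient of the $q$-form $u$, so its principal part is an uncoupled system of scalar heat operators. The plan is to treat ${\mathcal B}_q(\cdot,w)$ as a lower-order perturbation of this principal part and then to combine the Fredholm property with a uniqueness estimate in order to upgrade solvability to a topological isomorphism.

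First I would record the mapping properties of ${\mathcal B}_q(\cdot,w)$. By \eqref{eq.linear} each summand is the product of a first-order expression in one argument with a zero-order expression in the other; since $w\in\mathcal{F}^{k,\mathbf{s}(s,\lambda,\lambda',\delta)}_{T,\varLambda^q}$, Lemma~\ref{l.product} shows that ${\mathcal B}_q(\cdot,w)$ is continuous and, crucially, that multiplication by $w$ (of weight $\delta>0$) produces outputs with a strictly larger weight index. Next I would invoke the weighted parabolic Schauder theory for the present scale (established in \cite{ShlTa18}, cf. also \cite{GaSh20}): the pure heat Cauchy problem $(\partial_t-\mu\varDelta)u=g$, $u|_{t=0}=u_0$, defines a topological isomorphism of the relevant scale spaces. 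Denoting by $\mathcal{H}$ the corresponding solution operator and by $\mathcal{H}_0$ its restriction to data with $u_0=0$, applying $\mathcal{H}$ to the equation rewrites the problem as the operator equation of the second kind
$$
   (I+\mathcal{K})\,u = \mathcal{H}(f,u_0), \qquad \mathcal{K}:=\mathcal{H}_0\circ{\mathcal B}_q(\cdot,w),
$$
on $\mathcal{F}^{k,\mathbf{s}(s,\lambda,\lambda',\delta)}_{T,\varLambda^q}$.

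The central step is to prove that $\mathcal{K}$ is compact. Here one combines three effects: the first-order operator ${\mathcal B}_q(\cdot,w)$ costs one spatial derivative, the factor $w$ supplies a strict gain in the weight index (Lemma~\ref{l.product}), and the heat solution operator $\mathcal{H}_0$ restores two spatial derivatives. The resulting surplus of regularity together with the strict gain in weight let one factor $\mathcal{K}$ through one of the compact embeddings of Lemma~\ref{l.mathfrak.compact} (and Theorem~\ref{t.emb.hoelder.t}), which yields compactness. By the Riesz--Schauder theory $I+\mathcal{K}$ is then Fredholm of index zero, hence invertible precisely when it is injective.

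It remains to establish injectivity, i.e. uniqueness for the homogeneous problem $f=0$, $u_0=0$; this is where the hypothesis $\delta>n/2$ enters essentially. By \eqref{eq.Lq} with $p=2$ it guarantees that $u(\cdot,t)$ and its first derivatives lie in $L^2(\mathbb{R}^n)$, with the products occurring in ${\mathcal B}_q$ decaying fast enough that all integrations by parts over $\mathbb{R}^n$ carry no contribution from infinity. Multiplying the homogeneous equation by $u$ and integrating then gives
$$
   \tfrac12\,\tfrac{d}{dt}\|u(\cdot,t)\|_{L^2}^2 + \mu\|\nabla u(\cdot,t)\|_{L^2}^2 + \big\langle {\mathcal B}_q(u,w),u\big\rangle = 0 .
$$
Since ${\mathcal B}_q(\cdot,w)$ is first order in $u$ with coefficients bounded in terms of $w$ and $d w$, Young's inequality bounds the last term by $\varepsilon\|\nabla u\|_{L^2}^2 + C(w)\|u\|_{L^2}^2$; absorbing the gradient term and applying Gronwall's lemma with $u(\cdot,0)=0$ forces $u\equiv 0$. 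Consequently $I+\mathcal{K}$ is a topological isomorphism, and composing with $\mathcal{H}$ gives the unique solvability of \eqref{eq.NS.lin} together with the a priori bound $\|u\|\le c(w)\|F\|$, the constant $c(w)$ depending on $w$ through the norm of $(I+\mathcal{K})^{-1}$ (boundedness of the inverse being the open mapping theorem). The main obstacle is the compactness of $\mathcal{K}$: it demands careful bookkeeping of the competing loss of one derivative in ${\mathcal B}_q$, the weight gain from $w$, and the smoothing of $\mathcal{H}_0$, so that Lemma~\ref{l.mathfrak.compact} genuinely applies; a secondary technical point is the justification of the weighted integrations by parts, which is exactly what $\delta>n/2$ secures.
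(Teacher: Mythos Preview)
Your proposal is correct and follows essentially the same route as the paper. The paper makes the heat solution operator explicit via the parabolic volume and Poisson potentials $\varPsi_{\mu,q}$, $\varPsi_{\mu,q,0}$ (your $\mathcal{H}_0$ and $\mathcal{H}$), proves their boundedness on the scale (Lemma~\ref{l.heat.key1}), shows that $\varPsi_{\mu,q}B_q(w,\cdot)$ is compact exactly by the weight-gain/smoothing bookkeeping you describe (Lemma~\ref{l.compact}), and then establishes injectivity of $I+\varPsi_{\mu,q}B_q(w,\cdot)$ by the same $L^2$ energy identity, Young's inequality absorption, and Gronwall argument (Lemma~\ref{l.invertible.psi.lin.0}); the solution is then $u=(I+\varPsi_{\mu,q}B_q(w,\cdot))^{-1}(\varPsi_{\mu,q}f+\varPsi_{\mu,q,0}u_0)$.
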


\begin{proof} We use the theory of operator equations in Banach spaces and
 method of integral representation. Namely, 
Let $\psi_\mu$ be the standard fundamental solution of the convolution type to the heat operator $H_\mu = \partial _t - \mu \Delta$ in $\mathbb{R}^{n+1}$, $n\geq 1$,
$$
   \psi_\mu (x,t)
 = \frac{\theta (t)}{\left(4  \pi \mu t\right)^{n/2}}\
   e^{-\frac{\scriptstyle |x|^2}{\scriptstyle 4 \mu t}},
$$
where $\theta (t)$ is the Heaviside function. 
We set 
$$
\psi_{\mu,q} (x,y,t) = \sum_{|I|=q} \psi_\mu (x-y,t) \, (\star dy_I) \, dx_I,
$$ 
and for $q$-forms $v$ and $u_0$ over ${\mathbb R}^n \times [0,T]$ and 
${\mathbb R}^n$, respectively, denote by
\begin{eqnarray*}
   (\varPsi_\mu  v) (x,t)
 & = &
   \int_0^{t} \!\!\! \int _{\mathbb{R}^n} v (y,s) \wedge \psi_{\mu,q} (x,y, t-s) \,  ds,
\\
   (\varPsi_{\mu,q,0}  u_0) (x,t)
 & = &
   \int_{\mathbb{R}^n} u_0 (y)  \wedge \psi_{\mu,q} (x,y, t) 
\end{eqnarray*}
the so-called volume parabolic potential and Poisson parabolic potential, 
respectively, defined for $(x,t) \in {\mathbb R}^n \times (0,T) $. 

\begin{lem}
\label{l.heat.key1}
Let
   $s, k \in \mathbb{Z}_{\geq 0}$,
   $0 < \lambda< 1$ and
   $\delta > 0$.
The parabolic potentials
   $\varPsi_{\mu,q}$ and
   $\varPsi_{\mu,q,0}$
induce bounded linear operators
$$
 \varPsi_{\mu,q,0} :
 C^{2s+k,\lambda,\delta}_{\varLambda^q} (\mathbb{R}^n)
 \to
 C^{k,\mathbf{s} (s,\lambda,\delta)} _{T,\varLambda^q}  \cap \mathcal{S}_{H_\mu}, 
$$
$$
   \varPsi_{\mu,q} :
  C^{k,\mathbf{s} (s,\lambda,\delta)}_{T,\varLambda^q} 
  \to
  C^{k,\mathbf{s} (s,\lambda,\delta)} _{T,\varLambda^q, \mathcal{D}_{H_\mu}}  ,
\,\, 
  \varPsi_{\mu,q} :
  C^{k,\mathbf{s} (s-1,\lambda,\delta+2)}_{T,\varLambda^q} 
  \to
  C^{k,\mathbf{s} (s,\lambda,\delta)} _{T,\varLambda^q}  .
$$
\end{lem}

\begin{proof} As the potentials act  on the differential forms coefficient-wise, 
the statement follows from \cite[Lemmas 4.5 and 4.8]{ShlTa18}. 
\end{proof}

Now we set 
$$
W_q u = 	 M^{(q)}_1 ((d_q \oplus d_{q-1}^*) u, w) +   M^{(q)}_1 (
(d_q \oplus d_{q-1}^*) w, u  )  .
$$

\begin{lem} \label{l.compact}
If $k\geq {\mathbb N}$ and $\delta>1$ then following the operators are compact: 
\begin{equation}
\label{eq.compact}
\varPsi_{\mu,q} B_q (w,\cdot): {\mathcal F}^{k,\mathbf{s} (s,\lambda,\delta)} 
_{T ,\varLambda^{q}}
\to  
{\mathcal F}^{k,\mathbf{s} (s,\lambda,\delta)} _{T ,\varLambda^{q}  }, \,\,
\varPsi_{\mu,q} W_q: {\mathcal F}^{k,\mathbf{s} (s,\lambda,\delta)} _{T ,\varLambda^{q}, 
\mathcal{D}_{d\oplus d^*}}
\to  
{\mathcal F}^{k,\mathbf{s} (s,\lambda,\delta)} _{T ,\varLambda^{q} 
\mathcal{D}_{d\oplus d^*} }
\end{equation}
\end{lem}

\begin{proof}
 According to 
embedding Theorem \ref{t.emb.hoelder.t}, multiplication Lemma \ref{eq.product}, 
and Lem\-ma \ref{l.heat.key1}, the operators 
\begin{equation} \label{eq.cont.lin.0}
\varPsi_{\mu,q} B_q(w,\cdot): C^{k,\mathbf{s} (s,\lambda,\delta)} _{T ,\varLambda^{q}, }
\to  
C^{k-1,\mathbf{s} (s+1,\lambda,2\delta-1)} _{T ,\varLambda^{q}} ,
\end{equation}
\begin{equation} \label{eq.cont.lin.1}
\varPsi_{\mu,q} W_q: C^{k,\mathbf{s} (s,\lambda,\delta)} _{T ,\varLambda^{q}, 
\mathcal{D}_{d\oplus d^*}}
\to  
C^{k,\mathbf{s} (s+1,\lambda,2\delta-1)} _{T ,\varLambda^{q}} ,
\end{equation}
\begin{equation} \label{eq.cont.lin.2}
d_q \oplus d_{q-1}^*
\varPsi_{\mu,q} W_q: C^{k,\mathbf{s} (s,\lambda,\delta)} _{T ,\varLambda^{q}, 
\mathcal{D}_{d\oplus d^*}}
\to  
C^{k-1,\mathbf{s} (s+1,\lambda,2\delta)} _{T ,\varLambda^{q+1} \oplus 
\varLambda^{q-1}} ,
\end{equation}
are continuous if $k\geq 1$, $\delta>0$. As the embeddings 
\begin{equation} \label{eq.emb.1}
C^{k,\mathbf{s} (s+1,\lambda,2\delta-1)} _{T ,\varLambda^{q}} \to 
C^{k+2,\mathbf{s} (s,\lambda,2\delta-1)} _{T ,\varLambda^{q}}, 
\, C^{k-1,\mathbf{s} (s+1,\lambda,2\delta)} _{T ,\varLambda^{q+1} \oplus 
\varLambda^{q-1}} \to 
C^{k+1,\mathbf{s} (s,\lambda,2\delta)} _{T ,\varLambda^{q+1} \oplus 
\varLambda^{q-1}}
\end{equation}
are continuous, we see that the operator 
$\varPsi_{\mu,q} W_q$ maps the space 
$C^{k,\mathbf{s} (s,\lambda,\delta)} _{T ,\varLambda^{q}, 
\mathcal{D}_{d\oplus d^*}}$ continuously  to  
$C^{k+1,\mathbf{s} (s,\lambda,2\delta-1)} _{T ,\varLambda^{q} 
\mathcal{D}_{d\oplus d^*} } $ and the operator 
$\varPsi_{\mu,q} B_q (w,\cdot)$ maps the space 
$C^{k,\mathbf{s} (s,\lambda,\delta)} _{T ,\varLambda^{q}}$ continuously  to  
$C^{k+1,\mathbf{s} (s,\lambda,2\delta-1)} _{T ,\varLambda^{q} } $. 
In particular, the operators  
$$
\varPsi_{\mu,q} B_q (w,\cdot): {\mathcal F}^{k,\mathbf{s} (s,\lambda,\delta)} 
_{T ,\varLambda^{q}}
\to  
{\mathcal F}^{k+1,\mathbf{s} (s,\lambda,2\delta-1)} _{T ,\varLambda^{q}  }, 
$$
$$ 
\varPsi_{\mu,q} W_q: {\mathcal F}^{k,\mathbf{s} (s,\lambda,\delta)} _{T ,\varLambda^{q}, 
\mathcal{D}_{d\oplus d^*}}
\to  
{\mathcal F}^{k+1,\mathbf{s} (s,\lambda,2\delta-1)} _{T ,\varLambda^{q} 
\mathcal{D}_{d\oplus d^*} }
$$
are continuous, too, for $k \in \mathbb N$, $\delta>0$. If $k\in \mathbb N$, 
$\delta>1$ then $2\delta-1>\delta$ and hence, by Lemma 
\ref{l.mathfrak.compact}, the operators \eqref{eq.compact} are compact. 
\end{proof}

Next we reduce the Cauchy problem \eqref{eq.NS.lin} to an operator Frredholm 
equation.

\begin{lem}
\label{l.invertible.psi.lin.0}
Let $0\leq q \leq n$, 
   $s$ and $k $ be positive integers,
   $0 < \lambda < \lambda' < 1$,
   $\delta \in (n/2,+\infty)$, 
and $w \in \mathcal{F}^{k,\mathbf{s} (s,\lambda,\lambda',\delta)} 
_{T, \varLambda^{q}} $. 
Then the operator
\begin{equation}
\label{eq.heat.pseudo.d2.0}
   I \! + \! \varPsi_{\mu,q} B_q (w,\cdot) :
   \mathcal{F}^{k,\mathbf{s} (s,\lambda,\lambda',\delta)} 
_{T, \varLambda^{q}} 
 \to   \mathcal{F}^{k,\mathbf{s} (s,\lambda,\lambda',\delta)} 
_{T, \varLambda^{q}} 
\end{equation}
is continuously invertible. 
\end{lem}

\begin{proof} By Lemma \ref{l.compact}, the operator 
\begin{equation*}
  \varPsi_{\mu,q} B_q (w,\cdot)  :
   \mathcal{F}^{k,\mathbf{s} (s,\lambda,\lambda',\delta)} 
_{T, \varLambda^{q}} 
 \to   \mathcal{F}^{k,\mathbf{s} (s,\lambda,\lambda',\delta)} 
_{T, \varLambda^{q}} 
\end{equation*}
is compact. Hence the mapping \eqref{eq.heat.pseudo.d2.0} is a 
Fredholm linear operator of index zero by the famous Fredholm theorem; 
in particular, it is continuously invertible if and only if it is injective. 

Assume that $u \in \mathcal{F}^{k,\mathbf{s} (s,\lambda,\lambda',\delta)} 
_{T, \varLambda^{q}} $ and 
\begin{equation*}
u \! + \! \varPsi_{\mu,q} \, B_q (w,u)  =0.
\end{equation*}
 Then the properties 
of the fundamental solution $\varPsi_\mu$ mean that $u$ is a solution 
to the following Cauchy problem:
\begin{equation*} 
\left\{
\begin{array}{llll}
H_\mu u \! + \! B_q (w,u) & = & 0  & (x,t) \in {\mathbb R}^n \times (0,T), 
\\
   u
& =
& 0,
& (x,t) \in \mathbb{R}^n \times \{ 0 \}.
\end{array}
\right.
\end{equation*}
In particular, \eqref{eq.deRham} and an integration by parts yields for all $t \in [0,T]$:
\begin{equation} \label{eq.energy.lin.1}
\partial_t \|u (\cdot, t)\|^2_{L^2_{\varLambda^q} ({\mathbb R}^n)} 
+ \mu \, \sum_{j=1}^n\|\partial_j u (\cdot, t)\|^2_{L^2_{\varLambda^{q}} ({\mathbb R}^n)} 
 = (B_q (w,u)(\cdot, t),u (\cdot, t))^2_{L^2_{\varLambda^{q}} ({\mathbb R}^n)} .
\end{equation}

Using the structure of the operator $B_q (w,\cdot)$, see see that 
there are positive constants $c_q ^{(j)}$ independent on $t$ and $u$ such that 
\begin{equation} \label{eq.energy.lin.2}
|(B_q (w,u) (\cdot, t),u (\cdot, t))^2_{L^2_{\varLambda^{q}} ({\mathbb R}^n)} | \leq 
c_q ^{(1)}\|\nabla w\|_{C^{0,\mathbf{s} (0,0,\delta+1)}_{T,\varLambda^{q+1}}}  
\| u (\cdot, t)\|^2_{L^2_{\varLambda^{q}} ({\mathbb R}^n)} +
\end{equation}
$$
c_q ^{(2)}\|w\|_{C^{0,\mathbf{s} (0,0,\delta)}_{T,\varLambda^{q}}}  
\|\nabla u(\cdot, t) \|_{L^2_{\varLambda^{q}} ({\mathbb R}^n)} 
\| u(\cdot, t)\|_{L^2_{\varLambda^{q}} ({\mathbb R}^n)} \leq 
$$
$$\frac{\mu}{2} \|\nabla u(\cdot, t) \|^2_{L^2_{\varLambda^{q}} ({\mathbb R}^n)} +
\Big( 
\frac{2 (c_q ^{(2)})^2}{\mu} \|w\|^2_{C^{0,\mathbf{s} (0,0,\delta)}_{T,\varLambda^{q}}}
+ c_q ^{(1)}\|\nabla w\|_{C^{0,\mathbf{s} (0,0,\delta+1)}_{T,\varLambda^{q}}}  \Big)
\| u(\cdot, t)\|^2_{L^2_{\varLambda^{q}} ({\mathbb R}^n)} 
$$
for all $t \in [0,T]$. Now, combining \eqref{eq.energy.lin.1} and 
\eqref{eq.energy.lin.2}  we conclude that for all $t \in [0,T]$  
\begin{equation*} 
\partial_t \|u (\cdot, t)\|^2_{L^2_{\varLambda^q} ({\mathbb R}^n)}
\leq C_\mu \| u(\cdot, t)\|^2_{L^2_{\varLambda^{q}} ({\mathbb R}^n)} 
\end{equation*}
with  a positive constant $C_\mu$ independent on $u$. 
Finally, the Gronwall lemma, see, for instance, 
\cite[Ch. XII, p. 353, formula (1.2)]{MPF91}
 yields that $u\equiv 0$, that was to be proved. 
\end{proof}

Finally, according to Lemma \ref{l.heat.key1}, the form  
$$
v^{(0)} =\varPsi_{\mu,q}  f +  \varPsi_{\mu,q,0}  u_0 ,
$$
associated with  the pair \eqref{eq.pair.data}, belongs to the space 
$\mathcal{F}^{k,\mathbf{s} (s,\lambda,\lambda',\delta)} 
_{T, \varLambda^{q}} $.   
Then, 
there is a unique form $u \in \mathcal{F}^{k,\mathbf{s} (s,\lambda,\lambda',\delta)} 
_{T, \varLambda^{q}} $, satisfying 
 \begin{equation} \label{eq.pseudo.lin.0} 
u \! + \! \varPsi_{\mu,q} B_q(w,u) =v^{(0)}.
\end{equation}
By the properties of the fundamental solution $\varPsi_{\mu}$, we have 
\begin{equation}  \label{eq.Cauchy.lin}
\left\{
\begin{array}{llll}
H_\mu u \! + \!  B_q (w, u)    & = & 
f  & (x,t) \in {\mathbb R}^n \times (0,T), 
\\
   u
& =
& u_0,
& (x,t) \in \mathbb{R}^n \times \{ 0 \}.
\end{array}
\right.
\end{equation}

As we have seen in the proof of Lemma \ref{l.invertible.psi.lin.0}, 
problem \eqref{eq.Cauchy.lin} has no more than one solution 
in the weighted H\"older spaces and 
that was to be proved.
\end{proof}

Next, we consider the case where  $a=1$. 
At the degree $q=1$ the following theorem was proved in \cite[Corollary 5.9]{ShlTa18}.

\begin{thm}
\label{t.NS.deriv.unique}
Let $n\geq 2$, $1\leq q\leq n-1$, $a=1$. 
Assume that
   $s, k\in \mathbb N $,
   $0 < \lambda < \lambda' < 1$,
   $n/2<\delta <n$, \textcolor{red}{$\delta \ne (n-1)$}  
and 	$w \in 
	\mathcal{F}^{k,\mathbf{s} (s,\lambda,\lambda',\delta)}_{T, \varLambda^q, {\mathcal D}
	_{d\oplus d^*}}$. 
Then for any pair
\begin{equation} \label{eq.pair.data}
   F
 \! = \! (f,u_0)
 \in
   \mathcal{F}^{k,\mathbf{s} (s,\lambda,\lambda',\delta)} _{T, \varLambda^q, {\mathcal D}
	_{d\oplus d^*}} 
 \times
   C^{2s+k+1,\lambda,\delta}_{\varLambda^q}  \cap \mathcal{S}_{ d^\ast}
\end{equation}
there is a unique solution
$$
   U
 \! = \! (u,p)
 \in
   \mathcal{F}^{k,\mathbf{s} (s,\lambda,\lambda',\delta)} _{T, \varLambda^q, {\mathcal D}
	_{d\oplus d^*}} 
 \times
   \mathcal{F}^{k-1,\mathbf{s} (s-1,\lambda,\lambda',\delta-1)} _{T, \varLambda^{q-1}, 
	{\mathcal D}_{d\oplus d^*}} , 
$$
to \eqref{eq.NS.lin} and,  
moreover, 
$$
\|U\|_{\mathcal{F}^{k,\mathbf{s} (s,\lambda,\lambda',\delta)} _{T, \varLambda^q, 
 {\mathcal D}	_{d\oplus d^*}} 
 \times
   \mathcal{F}^{k-1,\mathbf{s} (s-1,\lambda,\lambda',\delta-1)} _{T, \varLambda^{q-1}, 
{\mathcal D}_{d\oplus d^*}} } \leq 
c (w) 
\|F\|_{ \mathcal{F}^{k,\mathbf{s} (s,\lambda,\lambda',\delta)} _{T, \varLambda^q, 
{\mathcal D}	_{d\oplus d^*}} 
 \times
   C^{2s+k+1,\lambda,\delta}_{\varLambda^q} }
$$
with a positive constant $c (w)$ independent on $F$.
\end{thm}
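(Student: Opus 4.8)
The plan is to reduce the case $a=1$ to the already settled case $a=0$ of Theorem~\ref{t.NS.deriv.unique.0} by means of the weighted Hodge theory for the de Rham complex developed in \cite{GaSh19}, \cite{Ga22}: I would eliminate the pressure by a Leray-type projection, solve for $u$ exactly as before, and then recover $p$ afterwards. First I would invoke the weighted Hodge decomposition to produce the orthogonal (Leray) projection $\mathcal{P}_q$ onto the subspace of co-closed forms $\{\,u : d_{q-1}^* u = 0\,\}$, whose orthogonal complement is precisely the range of $d_{q-1}$; thus $\mathcal{P}_q\, d_{q-1} p = 0$ for every $(q-1)$-form $p$, while $\mathcal{P}_q u = u$ whenever $d_{q-1}^* u = 0$. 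The crucial input is that, for weights in the range $n/2 < \delta < n$ with $\delta \neq n-1$, this projection — equivalently the Green operator $G_q$ of the Hodge Laplacian $d_q^* d_q + d_{q-1} d_{q-1}^* = -E_{m(q)} \Delta$ from \eqref{eq.deRham} — is bounded on each of the scales $C^{k,\mathbf{s}(s,\lambda,\delta)}_{T,\varLambda^q}$ and $\mathcal{F}^{k,\mathbf{s}(s,\lambda,\lambda',\delta)}_{T,\varLambda^q}$, the excluded values of $\delta$ being exactly the exceptional weights at which the Hodge Laplacian fails to be an isomorphism between the weighted spaces. Since $\mathcal{P}_q$ acts only in the space variables it commutes with $\partial_t$, and since $\Delta$ commutes with $d_{q-1}^*$ it commutes with $H_\mu$ as well.

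Second, applying $\mathcal{P}_q$ to the first line of \eqref{eq.NS.lin} annihilates $d_{q-1} p$ and leaves $H_\mu u$ unchanged on the co-closed subspace, so the system collapses to $H_\mu u + \mathcal{P}_q \mathcal{B}_q(u,w) = \mathcal{P}_q f$ with $u|_{t=0} = u_0$. This is an equation of exactly the type treated in Theorem~\ref{t.NS.deriv.unique.0}, now posed on the closed subspace $X = \{\,u : d_{q-1}^* u = 0\,\}$, and I would repeat the three-step argument of that proof verbatim with $B_q(w,\cdot)$ replaced by $\mathcal{P}_q B_q(w,\cdot)$. Lemma~\ref{l.compact} still yields compactness of $\varPsi_{\mu,q}\mathcal{P}_q B_q(w,\cdot)$ because $\mathcal{P}_q$ is bounded, so the Fredholm alternative again reduces invertibility of $I + \varPsi_{\mu,q}\mathcal{P}_q B_q(w,\cdot)$ to injectivity. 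The latter follows from the same energy identity \eqref{eq.energy.lin.1}--\eqref{eq.energy.lin.2} and Gronwall's lemma: since $\mathcal{P}_q$ is self-adjoint and $\mathcal{P}_q u = u$, the pairing $(\mathcal{P}_q B_q(w,u), u)_{L^2}$ reduces to $(B_q(w,u), u)_{L^2}$, which is controlled precisely as before; here the co-closedness of $u$ causes the exact part $d_{q-1} M^{(q)}_2(u,w)$ of the non-linearity to pair to zero against $u$, which only simplifies the estimate. This produces a unique co-closed $u \in \mathcal{F}^{k,\mathbf{s}(s,\lambda,\lambda',\delta)}_{T,\varLambda^q,\mathcal{D}_{d\oplus d^*}}$ with the required bound, and $d_{q-1}^* u_0 = 0$ (built into the data via $\mathcal{S}_{d^*}$) guarantees $d_{q-1}^* u \equiv 0$ for all $t$.

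Finally I would recover the pressure. Setting $g := (I - \mathcal{P}_q)\big(f - \mathcal{B}_q(u,w)\big) = f - H_\mu u - \mathcal{B}_q(u,w)$, which lies in the range of $d_{q-1}$ by construction, the remaining equations require $d_{q-1} p = g$ together with the gauge $d_{q-2}^* p = 0$. Using the weighted Hodge theory once more, this is solved explicitly by $p = d_{q-1}^* G_{q-1} g$, and the mapping properties of $G_{q-1}$ on the weighted scales (fed by the graph-norm regularity of $f$ in the space $\mathcal{D}_{d\oplus d^*}$) deliver $p \in \mathcal{F}^{k-1,\mathbf{s}(s-1,\lambda,\lambda',\delta-1)}_{T,\varLambda^{q-1},\mathcal{D}_{d\oplus d^*}}$ with the stated norm control; the drop by one in the weight index reflects that $d_{q-1}$ raises the order of decay at infinity by one, while the drops in $k$ and $s$ reflect the two space derivatives and one time derivative consumed by $H_\mu u$ inside $g$. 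Uniqueness of the full pair $(u,p)$ follows because $u$ is unique by the energy argument and $p$ is then determined uniquely by $d_{q-1} p = g$ under the gauge $d_{q-2}^* p = 0$. I expect the principal difficulty to be the first step: verifying that the Leray projection and the Green operators are bounded on these anisotropic weighted H\"older scales for precisely the admissible weights $n/2 < \delta < n$, $\delta \neq n-1$, and tracking how they interact with the graph-norm spaces $\mathcal{D}_{d\oplus d^*}$ — this is where the weighted Hodge theory of \cite{GaSh19}, \cite{Ga22} and the exclusion of the exceptional weight $\delta = n-1$ enter essentially.
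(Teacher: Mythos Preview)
Your proposal is correct and mirrors the paper's proof: the Leray projection you call $\mathcal{P}_q$ is realized there explicitly as $\varPhi_q d_q$ via the Newton potential (Lemma~\ref{l.deRham.Hoelder.t.new}), the Fredholm--energy--Gronwall argument on the co-closed subspace is Lemma~\ref{l.invertible.psi.lin}, and the weighted Hodge input from \cite{GaSh19,Ga22} that you single out as the crux is exactly what fixes the admissible range $n/2<\delta<n$, $\delta\neq n-1$. The only variation is in the pressure recovery: rather than solving $d_{q-1}p=g$ directly as you do, the paper first solves for an auxiliary $\tilde p$ at the level of parabolic potentials and then sets $p=H_\mu\tilde p$, which streamlines the regularity bookkeeping but is equivalent to your route via statement~(1) of the same lemma applied at weight $\delta-1$ (and in your formula the Green operator should be $G_q$, not $G_{q-1}$, since $g$ is a $q$-form).
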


\begin{proof} Let 
$$
   e (x)
 = \left\{ \begin{array}{lcl}
             \displaystyle
             \frac{1}{\pi} \ln |x|,
           & \mbox{for}
           & n = 2,
\\
             \displaystyle
             \frac{1}{\sigma_n} \frac{|x|^{2-n}}{2-n},
           & \mbox{for}
           & n \geq 3,
           \end{array}
   \right.
$$
be the standard two-sided fundamental solution of the convolution type to the Laplace operator
in $\mathbb{R}^n$ and $\sigma_n$ the area of the unit sphere in $\mathbb{R}^n$.
We set 
$$
e_q (x,y) = \sum_{|I|=q} e (x-y) \, (\star dy_I) \, dx_I,
$$
and 
then, for $f \in C^{k,\mathbf{s} (s,\lambda,\delta)} _{T ,\varLambda^{q+1}} $, 
\begin{equation} 
\label{eq.volume.potential.d} 
(\varPhi _q \, f) (x,t) =
\int_{{\mathbb R}^n} f (y,t)  \wedge \phi_q (x,y)
\end{equation}
where
$
\phi_q (x,y)= (d_{n-q-1})^*_y e_q (x,y) $, $n\geq 2$.

\begin{lem} \label{l.deRham.Hoelder.t.new}
Let $n\geq 2$, $q \geq 0$, $s \in {\mathbb Z}_+$, $k \in {\mathbb Z}_+$, 
$0<\lambda < 1$, $\delta>0$, $\delta+1-n \not \in {\mathbb Z}_+$. 
The differential $d$ induces a bounded linear  operator
\begin{equation*} 
   d_q :\,
   {\mathcal F}^{k,\mathbf{s} (s,\lambda,\delta)} _{T ,\varLambda^{q}, \mathcal{D}_{d\oplus d^*}} 
	\cap \mathcal{S}_{d^*} \to
   {\mathcal F}^{k,\mathbf{s} (s,\lambda,\delta+1)} _{T ,\varLambda^{q}}
	\cap \mathcal{S}_{d}.
\end{equation*}
The related operator equation a normally solvable map; more precisely,  
\begin{enumerate}
\item 
the operator $d_q$ is an isomorphism if 
$0 < \delta<n-1 $ and its inverse is given by the integral operator $\varPhi_q$; 
\item 
if there is $m \in {\mathbb Z}_+$ such that $n-1 + m < \delta< 
n  + m$ then $d$ defines an injection and its (closed) range 
$R^{k,\mathbf{s} (s,\lambda,\delta\!+\!1)}_{T,\varLambda^{q+1}}$ consists of  $f \in 
	{\mathcal F}^{k,\mathbf{s} (s,\lambda,\delta+1)} _{T ,\varLambda^{q+1}}
	\cap \mathcal{S}_{d_{q+1}}$ satisfying
 for all  $t \in [0,T]$ and  all  $h \in H_{\leq m+1,\varLambda^{q}}$
$$
(f (\cdot,t),d_q h)_{L^2 ({\mathbb R}^n, \varLambda^{q+1})} = 0 ,
$$
and the left inverse  of $d$ is given by the integral operator $\varPhi_q$. 
\end{enumerate} 
\end{lem}

\begin{proof} Follows immediately from the definition of the scale
${\mathcal F}^{k,\mathbf{s} (s,\lambda,\delta)} _{T ,\varLambda^{q}}$ and 
\cite[Theorem 3.4]{Ga22} 
where the range of the operator 
\begin{equation*} 
   d_q \oplus d_{q-1}^*:\,
   C^{k,\mathbf{s} (s,\lambda,\delta)} _{T ,\varLambda^{q}, \mathcal{D}_{d\oplus d^*}} 
\to
   C^{k,\mathbf{s} (s,\lambda,\delta+1)} _{T ,\varLambda^{q+1}}
	\times C^{k,\mathbf{s} (s,\lambda,\delta+1)} _{T ,\varLambda^{q-1}}
\end{equation*}
was described. 
\end{proof}

Now, if $0 <\delta <n-1$ then $\varPhi_q d_q $ maps 
the space ${\mathcal F}^{k,\mathbf{s} (s,\lambda,\delta)} _{T ,\varLambda^{q} 
\mathcal{D}_{d\oplus d^*} }$ continuously to 
${\mathcal F}^{k,\mathbf{s} (s,\lambda,\delta)} _{T ,\varLambda^{q} 
\mathcal{D}_{d\oplus d^*} } \cap {\mathcal S}_{d^*}$
 for $\delta+1-n \not \in {\mathbb Z}_+$.  
If $n-1<\delta<n$, then, 
\begin{equation} \label{eq.range.D}
(d_q v (\cdot,t),d_q h)_{L^2 ({\mathbb R}^n, \varLambda^{q+1})} =
(v (\cdot,t),(d_q^*  d_q h)_{L^2 ({\mathbb R}^n, \varLambda^{q+1})} =0
\end{equation}
for all  $t \in [0,T]$ and  all  $h \in H_{\leq 1,\varLambda^{q}}$ 
and any $v\in {\mathcal F}^{k,\mathbf{s} (s,\lambda,\delta)} _{T ,\varLambda^{q}, 
\mathcal{D}_{d\oplus d^*}} $. 
Applying  Lemma \ref{l.deRham.Hoelder.t.new}
with $m=0$ we see that the operator 
\begin{equation} \label{eq.Phiq.cont}
\varPhi_q d_q: {\mathcal F}^{k,\mathbf{s} (s,\lambda,\delta)} _{T ,\varLambda^{q} 
\mathcal{D}_{d\oplus d^*} } 
\to {\mathcal F}^{k,\mathbf{s} (s,\lambda,\delta)} _{T ,\varLambda^{q} 
\mathcal{D}_{d\oplus d^*} } \cap {\mathcal S}_{d^*}
\end{equation}
is a continuous  if $n/2<\delta<n$, $\delta+1-n \not \in {\mathbb Z}_+$, too; 
in particular, $\varPhi_q d_q u = u$ for 
all ${\mathcal F}^{k,\mathbf{s} (s,\lambda,\delta)} _{T ,\varLambda^{q}, 
\mathcal{D}_{d\oplus d^*}} $. Actually, the operator $\varPhi_q d_q$
represents the Leray-Helmholtz type $L^2$-projection on the subspace 
$L^2_{\varLambda^q} ({\mathbb R}^n) \cap {\mathcal S}_{d^*}$ of 
$L^2_{\varLambda^q} ({\mathbb R}^n)$, see \cite[Corollary 1]{GaSh19} 
for the isotropic weighted H\"older spaces or 
\cite[Corollary 2]{GaSh20} for the anisotropic ones.

\begin{lem}
\label{l.invertible.psi.lin}
Let $1\leq q \leq n-1$, 
   $s$ and $k $ be positive integers,
   $0 < \lambda < \lambda' < 1$,
   $\delta \in (n/2,n)$, \textcolor{red}{$\delta \ne (n-1)$} 
and $w \in \mathcal{F}^{k,\mathbf{s} (s,\lambda,\lambda',\delta)} 
_{T, \varLambda^{q}, {\mathcal D}_{d\oplus d^*}} $. 
Then the operator
\begin{equation}
\label{eq.heat.pseudo.d2}
   I \! + \! \varPhi_q \, d_q \, \varPsi_{\mu,q} W_q :
   \mathcal{F}^{k,\mathbf{s} (s,\lambda,\lambda',\delta)} 
_{T, \varLambda^{q}, {\mathcal D}_{d\oplus d^*}} \cap \mathcal{S}_{d^*}
 \to   \mathcal{F}^{k,\mathbf{s} (s,\lambda,\lambda',\delta)} 
_{T, \varLambda^{q}, {\mathcal D}_{d\oplus d^*}} \cap \mathcal{S}_{d^*}
\end{equation}
is continuously invertible. 
\end{lem}

\begin{proof} By Lemma \ref{l.compact} and the discussion above, the operator 
\begin{equation*}
   \varPhi_q \, d_q \, \varPsi_{\mu,q} W_q :
   \mathcal{F}^{k,\mathbf{s} (s,\lambda,\lambda',\delta)} 
_{T, \varLambda^{q}, {\mathcal D}_{d\oplus d^*}} \cap \mathcal{S}_{d^*}
 \to   \mathcal{F}^{k,\mathbf{s} (s,\lambda,\lambda',\delta)} 
_{T, \varLambda^{q}, {\mathcal D}_{d\oplus d^*}} \cap \mathcal{S}_{d^*}
\end{equation*}
is compact. Hence the mapping \eqref{eq.heat.pseudo.d2} is a 
Fredholm linear operator of index zero by the famous Fredholm theorem; 
in particular, it is continuously invertible if and only if it is injective. 

First, we note that, as the scalar $H_\mu$ commutes with the differential operator $d_q$ we 
conclude that the operators $\varPhi_q d_q$ and $H_\mu$ commute, too. Then 
any element  $u$ from the kernel of the operator \eqref{eq.heat.pseudo.d2} is a solution 
to the following Cauchy problem:
\begin{equation*} 
\left\{
\begin{array}{llll}
H_\mu u \! + \! \varPhi_q \, d_q  \, 
W_q (w,u) & = & 0  & (x,t) \in {\mathbb R}^n \times (0,T), 
\\
   u
& =
& 0,
& (x,t) \in \mathbb{R}^n \times \{ 0 \}.
\end{array}
\right.
\end{equation*}

Next, according, to \cite[Corollary 2]{GaSh20}, the operator $\varPhi_q d_q$
represents the Leray-Helmholtz type $L^2$-projection on the subspace 
$L^2_{\varLambda^q} ({\mathbb R}^n) \cap {\mathcal S}_{d^*}$ of 
$L^2_{\varLambda^q} ({\mathbb R}^n)$ and hence  
$$
(B_q (w,u)(\cdot, t),u (\cdot, t))^2_{L^2_{\varLambda^{q}} ({\mathbb R}^n)}  = 
(\varPhi_q d_q W_q u(\cdot, t),u (\cdot, t))^2_{L^2_{\varLambda^{q}} ({\mathbb R}^n)} 
$$
for all $t \in [0,T]$ and all $u \in \mathcal{F}^{k,\mathbf{s} (s,\lambda,\lambda',\delta)} 
_{T, \varLambda^{q}, {\mathcal D}_{d\oplus d^*}} \cap \mathcal{S}_{d^*}$. 
Therefore, the  injectivity  of the operator \eqref{eq.heat.pseudo.d2}  follows 
in the same way as for the  operator \eqref{eq.heat.pseudo.d2.0}.
\end{proof}

To finish the proof of the theorem, we note that 
the form  
$$v^{(0)} = \varPhi_q d_q \Big(\varPsi_{\mu,q}  f +  \varPsi_{\mu,q,0}  u_0 \Big),
$$
associated with  the pair \eqref{eq.pair.data}, belongs to the space 
$\mathcal{F}^{k,\mathbf{s} (s,\lambda,\lambda',\delta)} 
_{T, \varLambda^{q}, {\mathcal D}_{d\oplus d^*}} \cap \mathcal{S}_{d^*}$ if 
$\delta \in (n/2,n)$, $\delta+1-n \not \in {\mathbb Z}_+$. 
Then, according to Lemma \ref{l.deRham.Hoelder.t.new}, 
there is a unique form $u \in \mathcal{F}^{k,\mathbf{s} (s,\lambda,\lambda',\delta)} 
_{T, \varLambda^{q}, {\mathcal D}_{d\oplus d^*}} \cap \mathcal{S}_{d^*}$, satisfying 
 \begin{equation} \label{eq.pseudo.lin} 
u \! + \! \varPhi_q \, d_q \, \varPsi_{\mu,q} W_q u =v^{(0)}.
\end{equation}
By the discussion above, see \eqref{eq.range.D} and Lemma \ref{l.deRham.Hoelder.t.new}, 
 \begin{equation} \label{eq.pseudo.lin.1} 
\varPhi_q \, d_q \, \varPsi_{\mu,q} W_q u = \varPhi_q \, d_q \, \varPsi_\mu B_q (w,u),
\end{equation}
$$
d_q (I- \varPhi_q \, d_q)  \Big(\varPsi_{\mu,q}  f +  \varPsi_{\mu,q,0}  u_0 
- \varPsi_{\mu,q} B_q (w,u)\Big) =0
$$
if 
$\delta \in (n/2,n)$, $\delta+1-n \not \in {\mathbb Z}_+$. Hence, applying statement 
(1) of Lemma \ref{l.deRham.Hoelder.t.new} we see that there is a unique form 
$\tilde p   \in
     \mathcal{F}^{k-1,\mathbf{s} (s,\lambda,\lambda',\delta-1)} _{T, \varLambda^{q-1}, 
	{\mathcal D}_{d\oplus d^*}} \cap {\mathcal S}_{d^*}$ satisfying 
 \begin{equation} \label{eq.pseudo.lin.2} 
	d_{q-1} \tilde p = (I- \varPhi_q \, d_q)  \Big(\varPsi_{\mu,q}  f +  \varPsi_{\mu,q,0}  u_0 
- \varPsi_\mu B_q (w,u)\Big). 
\end{equation}
Taking in account \eqref{eq.pseudo.lin},
\eqref{eq.pseudo.lin.1}, \eqref{eq.pseudo.lin.2}, we conclude that 
\begin{equation*}  
u \! + \!  \varPsi_{\mu,q} B_q (w,u) + d_{q-1}\tilde  p  =
\varPsi_{\mu,q}  f +  \varPsi_{\mu,q,0}  u_0
\end{equation*}
Then the form $p = H_\mu \tilde  p$ belongs to the space 
$\mathcal{F}^{k-1,\mathbf{s} (s-1,\lambda,\lambda',\delta-1)} _{T, \varLambda^{q-1}, 
	{\mathcal D}_{d\oplus d^*}} \cap {\mathcal S}_{d^*}$. 
Again, the properties 
of the fundamental solution $\varPsi_\mu$ mean that the pair $(u,p) $ is a solution 
to the Cauchy problem \eqref{eq.NS.lin}. Moreover $u$ is a solution to 
\begin{equation}  \label{eq.Cauchy.pseudo.lin}
\left\{
\begin{array}{llll}
H_\mu u \! + \!  \varPhi_q d_q 
B_q (w, u)    & = & \varPhi_q d_q 
f  & (x,t) \in {\mathbb R}^n \times (0,T), 
\\
   u
& =
& 
u_0,
& (x,t) \in \mathbb{R}^n \times \{ 0 \}.
\end{array}
\right.
\end{equation}

As we have noted in the proof of Lemma \ref{l.invertible.psi.lin}, 
problem \eqref{eq.Cauchy.pseudo.lin} has no more than one solution 
in the weighted H\"older spaces and then the uniqueness of the solution  
$(u,p)$ to \eqref{eq.NS.lin} follows from Lemma \ref{l.deRham.Hoelder.t.new} because
$d_{q-1}  p = (I- \varPhi_q \, d_q)  \Big(  f  - B_q (w,u)\Big)$, 
that was to be proved.
\end{proof}

Now we may pass to the non-linear problem \eqref{eq.NS}. 

\begin{cor}
\label{c.invertible.psi.0}
Let $n\geq 2$, $0\leq q \leq n$, $a=0$,   
   $s$ and $k $ be positive integers,
   $0 < \lambda < \lambda' < 1$,
   $\delta \in (n/2,+\infty)$. 
Then the non-linear mapping
\begin{equation}
\label{eq.heat.pseudo.nl1.0}
 \varPsi_{\mu,q}  \mathcal{N}_q :
   \mathcal{F}^{k,\mathbf{s} (s,\lambda,\lambda',\delta)} 
_{T, \varLambda^{q}} 
 \to   \mathcal{F}^{k,\mathbf{s} (s,\lambda,\lambda',\delta)} 
_{T, \varLambda^{q}} 
\end{equation} 
is continuous and compact and the mapping 
\begin{equation}
\label{eq.heat.pseudo.nl2.0}
   I \! + \! \varPsi_ {\mu,q} \mathcal{N}_q :
   \mathcal{F}^{k,\mathbf{s} (s,\lambda,\lambda',\delta)} 
_{T, \varLambda^{q}}
 \to   \mathcal{F}^{k,\mathbf{s} (s,\lambda,\lambda',\delta)} 
_{T, \varLambda^{q}} 
\end{equation}
is continuous, Fredholm, injective and open. 
\end{cor}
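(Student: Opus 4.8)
The plan is to establish the three properties of the non-linear mapping in Corollary \ref{c.invertible.psi.0} by reducing each to results already proved for the linear problem. First I would verify continuity and compactness of $\varPsi_{\mu,q}\mathcal{N}_q$. Recall from \eqref{eq.nonlinear} that $\mathcal{N}_q u = M^{(q)}_1((d_q\oplus d_{q-1}^*)u,u) + d_{q-1}M^{(q)}_2(u,u)$, so each summand is a bilinear form in $u$ with constant-coefficient zero-order operators. Applying the multiplication Lemma \ref{l.product} together with the mapping properties of $\varPsi_{\mu,q}$ from Lemma \ref{l.heat.key1}, the composition $\varPsi_{\mu,q}\mathcal{N}_q$ should map $\mathcal{F}^{k,\mathbf{s}(s,\lambda,\lambda',\delta)}_{T,\varLambda^q}$ into a space with the gain of one unit in both smoothness and weight (a weight of order $2\delta-1>\delta$ when $\delta>n/2>1/2$), exactly as in the proof of Lemma \ref{l.compact}. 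Compactness then follows from the compact embedding Lemma \ref{l.mathfrak.compact}, precisely mirroring \eqref{eq.compact}. Continuity of the non-linear map is a matter of bilinearity: the difference $\mathcal{N}_q u - \mathcal{N}_q v$ expands into terms each containing a factor $u-v$, so the multiplication lemma yields local Lipschitz bounds.

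Next I would address the Fredholm property. The Fr\'echet derivative of $I+\varPsi_{\mu,q}\mathcal{N}_q$ at a point $w$ is computed by linearizing the bilinear non-linearity: the derivative of $\mathcal{N}_q$ at $w$ applied to a direction $u$ is precisely the operator $B_q(w,u)$ appearing in \eqref{eq.linear}, since symmetrizing the bilinear form produces exactly the two pairs of terms in ${\mathcal B}_q(u,w)$. Hence the derivative is
\begin{equation*}
\big(I+\varPsi_{\mu,q}\mathcal{N}_q\big)'_{|w} = I + \varPsi_{\mu,q}B_q(w,\cdot),
\end{equation*}
which is exactly the operator shown to be continuously invertible in Lemma \ref{l.invertible.psi.lin.0} for every $w\in\mathcal{F}^{k,\mathbf{s}(s,\lambda,\lambda',\delta)}_{T,\varLambda^q}$ under the hypotheses $\delta>n/2$, $s,k\in\mathbb{N}$. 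An invertible operator is trivially Fredholm of index zero, so the mapping is Fredholm at every point, as required by the definition recalled at the start of the section.

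Injectivity and openness are the two remaining claims, and I expect injectivity to be the genuine point of substance. If $u+\varPsi_{\mu,q}\mathcal{N}_q u = v+\varPsi_{\mu,q}\mathcal{N}_q v$, then by the properties of the fundamental solution the difference $u-v$ solves the homogeneous Cauchy problem $H_\mu(u-v) + \mathcal{N}_q u - \mathcal{N}_q v = 0$ with zero initial data. Writing $\mathcal{N}_q u - \mathcal{N}_q v = B_q(u+v,\,u-v)/\text{(appropriate symmetrization)}$ reduces the situation to the linear energy estimate already carried out in \eqref{eq.energy.lin.1}--\eqref{eq.energy.lin.2}: taking the $L^2$ inner product with $u-v$, integrating the quadratic non-linearity using \eqref{eq.deRham}, and absorbing it via Young's inequality into the viscous term $\mu\|\nabla(u-v)\|^2$ yields a differential inequality $\partial_t\|(u-v)(\cdot,t)\|^2_{L^2}\le C\|(u-v)(\cdot,t)\|^2_{L^2}$, whence the Gronwall lemma forces $u\equiv v$. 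The subtlety here, and the main obstacle, is that the constant $C$ now depends on $u+v$ rather than on a fixed $w$, so one must confirm that the energy bounds \eqref{eq.energy.lin.2} remain uniform on the relevant bounded set using the $L^p$-control \eqref{eq.Lq} guaranteed by $\delta>n/2$; granting this, injectivity follows verbatim from the linear argument.

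Finally, openness is a soft consequence of the preceding two facts: a $C^1$ Fredholm map of index zero whose derivative is everywhere invertible is, by the inverse function theorem in Banach spaces, a local diffeomorphism at each point, hence an open map. Combined with global injectivity this shows $I+\varPsi_{\mu,q}\mathcal{N}_q$ is an open injective Fredholm mapping, completing the proof.
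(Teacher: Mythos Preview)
Your argument is correct and largely parallels the paper's: the continuity and compactness of $\varPsi_{\mu,q}\mathcal{N}_q$ via Lemma~\ref{l.product}, Lemma~\ref{l.heat.key1} and the compact embedding Lemma~\ref{l.mathfrak.compact}, together with the identification of the Fr\'echet derivative at $w$ as $I+\varPsi_{\mu,q}B_q(w,\cdot)$ (invertible by Lemma~\ref{l.invertible.psi.lin.0}), are exactly what the paper does.

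The one genuine difference is the treatment of injectivity. The paper simply asserts that \emph{both} openness and injectivity follow from the implicit function theorem for Banach spaces, citing Hamilton. You instead give a direct global argument: from $\mathcal{N}_q u-\mathcal{N}_q v=\tfrac{1}{2}\mathcal{B}_q(u+v,u-v)$ the difference $u-v$ solves the homogeneous linear Cauchy problem with $w=(u+v)/2$, and the energy/Gronwall estimate of Lemma~\ref{l.invertible.psi.lin.0} forces $u=v$. Your route is arguably more transparent, since the inverse function theorem alone yields only \emph{local} injectivity; the paper leaves the passage to global injectivity implicit. One small remark: your concern about uniformity of the Gronwall constant over bounded sets is unnecessary---for a fixed pair $(u,v)$ the element $w=(u+v)/2$ is a fixed member of $\mathcal{F}^{k,\mathbf{s}(s,\lambda,\lambda',\delta)}_{T,\varLambda^q}$, so the linear estimate \eqref{eq.energy.lin.1}--\eqref{eq.energy.lin.2} applies verbatim and no uniformity is required.
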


\begin{proof} Since    the bilinear form $\mathcal{B}_q$ is symmetric and
   $\mathcal{B}_q (u,u) = 2 \mathcal{N}_q (u)$,
we easily obtain
\begin{equation}
\label{eq.M.diff}
   \mathcal{N}_q (u') - \mathcal{N}_q (u'')
 = \mathcal{B}_q (u'', u'-u'') + (1/2)\, \mathcal{B}_q (u'-u'', u'-u'').
\end{equation}
Then, using theorem \ref{t.emb.hoelder.t}, multiplication Lemma \ref{eq.product}, 
and Lemma \ref{l.heat.key1}, cf. \eqref{eq.cont.lin.0}, we see that 
$$
\| \varPsi_{\mu,q}  
\mathcal{N}_q (u') - 
\varPsi_{\mu,q}  \mathcal{N}_q (u'')\|_{C^{k-1,\mathbf{s} (s+1,\lambda,2\delta-1)} _{T ,\varLambda^{q}}} \leq 
$$
$$
C_1 \, 
\|u''\|_{C^{k,\mathbf{s} (s,\lambda,\delta)} _{T ,\varLambda^{q}, 
\mathcal{D}_{d\oplus d^*}}} \, 
\|u'-u''\|_{C^{k,\mathbf{s} (s,\lambda,\delta)} _{T ,\varLambda^{q}}} + 
C_2 \, 
\|u'-u''\|^2_{C^{k,\mathbf{s} (s,\lambda,\delta)} _{T ,\varLambda^{q}}},
$$
with positive constants $C_j$ independent on $u',u''$. As the 
embeddings \eqref{eq.emb.1} are continuous, 
the nonlinear operator $\varPsi_{\mu,q} \mathcal{N}_q$ maps the space 
$C^{k,\mathbf{s} (s,\lambda,\delta)} _{T ,\varLambda^{q}}$ continuously  to  
$C^{k+1,\mathbf{s} (s,\lambda,2\delta-1)} _{T ,\varLambda^{q} } $. In particular, the operator 
$$
\varPsi_{\mu,q} \mathcal{N}_q: {\mathcal F}^{k,\mathbf{s} (s,\lambda,\delta)} _{T ,\varLambda^{q}, 
\mathcal{D}_{d\oplus d^*}}
\to  
{\mathcal F}^{k+1,\mathbf{s} (s,\lambda,2\delta-1)} _{T ,\varLambda^{q} 
\mathcal{D}_{d\oplus d^*} }, 
$$
is continuous, too, for $k \in \mathbb N$. If $\delta>1$ then 
$2\delta-1>\delta$ and hence, by Lemma 
\ref{l.mathfrak.compact}, 
the operator 
$$
\varPsi_{\mu,q} \mathcal{N}_q: {\mathcal F}^{k,\mathbf{s} (s,\lambda,\delta)} _{T ,\varLambda^{q}, \mathcal{D}_{d\oplus d^*}}
\to  
{\mathcal F}^{k,\mathbf{s} (s,\lambda,\delta)} _{T ,\varLambda^{q} 
\mathcal{D}_{d\oplus d^*} }
$$
is compact. 

Equality \eqref{eq.M.diff}  makes it evident that the Frech\'et derivative
$(I \! + \!  \varPsi_ {\mu,q} \mathcal{N}_q)'_{|w}$ of the nonlinear mapping 
$ (I \! + \!  \varPsi_ {\mu,q} \mathcal{N}_q)$ at an arbitrary point 
$  w  \in {\mathcal F}^{k,\mathbf{s} (s,\lambda,\delta)} _{T ,\varLambda^{q}}$ coincides with the continuous linear mapping $ (I \! + 
\!  \varPsi_ {\mu,q} B_q(w,\cdot)$. By Lemma \ref{l.invertible.psi.lin.0}, 
$(I \! + 
\!  \varPsi_ {\mu,q} B_q(w,\cdot))$ is an invertible continuous linear
mapping of the space ${\mathcal F}^{k,\mathbf{s} (s,\lambda,\delta)} _{T ,\varLambda^{q}}$ and hence the non-linear mapping \eqref{eq.heat.pseudo.nl2}
is Fredholm one. Both the openness and the injectivity of the mapping 
\eqref{eq.heat.pseudo.nl2}  follow now from the implicit function theorem 
for Banach spaces,   see for instance  \cite[Theorem 5.2.3, p.~101]{Ham82}.
\end{proof}

\begin{cor}
\label{c.invertible.psi}
Let $n\geq 2$, $1\leq q \leq n-1$, $a=1$, 
   $s$ and $k $ be positive integers,
   $0 < \lambda < \lambda' < 1$,
   $\delta \in (n/2,n)$, \textcolor{red}{$\delta \ne (n-1)$}.
Then the non-linear mapping
\begin{equation}
\label{eq.heat.pseudo.nl1}
 \! \varPhi_q \, d_q \, \varPsi_{\mu,q}  \mathcal{N}_q :
   \mathcal{F}^{k,\mathbf{s} (s,\lambda,\lambda',\delta)} 
_{T, \varLambda^{q}, {\mathcal D}_{d\oplus d^*}} \cap \mathcal{S}_{d^*}
 \to   \mathcal{F}^{k,\mathbf{s} (s,\lambda,\lambda',\delta)} 
_{T, \varLambda^{q}, {\mathcal D}_{d\oplus d^*}} \cap \mathcal{S}_{d^*}
\end{equation} 
is continuous and compact and the mapping 
\begin{equation}
\label{eq.heat.pseudo.nl2}
   I \! + \! \varPhi_q \, d_q \, \varPsi_ {\mu,q} \mathcal{N}_q :
   \mathcal{F}^{k,\mathbf{s} (s,\lambda,\lambda',\delta)} 
_{T, \varLambda^{q}, {\mathcal D}_{d\oplus d^*}} \cap \mathcal{S}_{d^*}
 \to   \mathcal{F}^{k,\mathbf{s} (s,\lambda,\lambda',\delta)} 
_{T, \varLambda^{q}, {\mathcal D}_{d\oplus d^*}} \cap \mathcal{S}_{d^*}
\end{equation}
is continuous, Fredholm, injective and open. 
\end{cor}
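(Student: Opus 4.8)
The plan is to repeat, \emph{mutatis mutandis}, the argument used for Corollary \ref{c.invertible.psi.0}, inserting the Leray--Helmholtz type projection $\varPhi_q d_q$ and replacing the ambient space by its $d^\ast$-closed subspace $\mathcal{F}^{k,\mathbf{s} (s,\lambda,\lambda',\delta)}_{T,\varLambda^{q},\mathcal{D}_{d\oplus d^*}} \cap \mathcal{S}_{d^*}$. The weight restriction $\delta \in (n/2,n)$, $\delta \neq n-1$, is precisely the condition $n/2 < \delta < n$ with $\delta + 1 - n \notin {\mathbb Z}_+$ which, through \eqref{eq.Phiq.cont}, makes $\varPhi_q d_q$ a well-defined continuous projection onto $\mathcal{S}_{d^*}$ and renders Lemmas \ref{l.deRham.Hoelder.t.new} and \ref{l.invertible.psi.lin} applicable.

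To obtain the continuity and compactness of \eqref{eq.heat.pseudo.nl1}, I would first establish, exactly as in the proof of Corollary \ref{c.invertible.psi.0}, that $\varPsi_{\mu,q}\mathcal{N}_q$ is continuous and compact as a self-map of $\mathcal{F}^{k,\mathbf{s} (s,\lambda,\lambda',\delta)}_{T,\varLambda^{q},\mathcal{D}_{d\oplus d^*}}$: starting from the polarization identity \eqref{eq.M.diff} and combining embedding Theorem \ref{t.emb.hoelder.t}, multiplication Lemma \ref{eq.product} and Lemma \ref{l.heat.key1} as in \eqref{eq.cont.lin.0} and \eqref{eq.emb.1}, one sees that $\varPsi_{\mu,q}\mathcal{N}_q$ improves the weight index from $\delta$ to $2\delta-1$, whence Lemma \ref{l.mathfrak.compact} yields compactness because $2\delta-1 > \delta$ for $\delta > 1$. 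Since $\varPhi_q d_q$ is continuous on the weight-$\delta$ scale by \eqref{eq.Phiq.cont}, post-composition with it preserves compactness, so $\varPhi_q d_q \varPsi_{\mu,q}\mathcal{N}_q$ is a continuous compact self-map of the constrained space $\mathcal{F}^{k,\mathbf{s} (s,\lambda,\lambda',\delta)}_{T,\varLambda^{q},\mathcal{D}_{d\oplus d^*}} \cap \mathcal{S}_{d^*}$.

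For the Fredholm property of \eqref{eq.heat.pseudo.nl2}, the crucial point is that $\varPhi_q d_q$ annihilates every $d_{q-1}$-exact summand, since $d_q \circ d_{q-1} = 0$ by \eqref{eq.deRham}; hence $\varPhi_q d_q \varPsi_{\mu,q} B_q(w,\cdot) = \varPhi_q d_q \varPsi_{\mu,q} W_q$, as already recorded in \eqref{eq.pseudo.lin.1}. The identity \eqref{eq.M.diff} then shows that the Fr\'echet derivative $(I + \varPhi_q d_q \varPsi_{\mu,q}\mathcal{N}_q)'_{|w}$ at an arbitrary point $w$ coincides with the continuous linear operator $I + \varPhi_q d_q \varPsi_{\mu,q} W_q$, which is continuously invertible on the constrained space by Lemma \ref{l.invertible.psi.lin}. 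In particular this derivative is Fredholm of index zero at every $w$, so \eqref{eq.heat.pseudo.nl2} is a Fredholm mapping, and both its openness and injectivity follow from the inverse function theorem for Banach spaces, see \cite[Theorem 5.2.3, p.~101]{Ham82}, just as in Corollary \ref{c.invertible.psi.0}.

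The main obstacle I expect is bookkeeping rather than genuine analysis: one has to check carefully that $\varPhi_q d_q \varPsi_{\mu,q}\mathcal{N}_q$ keeps the constrained space $\cap\, \mathcal{S}_{d^*}$ invariant (which rests on the range of $\varPhi_q d_q$ lying in $\mathcal{S}_{d^*}$, guaranteed by \eqref{eq.Phiq.cont}) and that the exclusion $\delta \neq n-1$ together with $n/2 < \delta < n$ is exactly what keeps $\varPhi_q d_q$ well-defined and the Fr\'echet derivative invertible, so that the scheme of Corollary \ref{c.invertible.psi.0} transfers verbatim to the present $a = 1$ setting.
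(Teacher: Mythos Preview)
Your proposal is correct and follows essentially the same route as the paper's own proof: both argue by transplanting the proof of Corollary~\ref{c.invertible.psi.0} to the constrained space $\mathcal{F}^{k,\mathbf{s}(s,\lambda,\lambda',\delta)}_{T,\varLambda^{q},\mathcal{D}_{d\oplus d^*}}\cap\mathcal{S}_{d^*}$, invoking the continuity of $\varPhi_q d_q$ from \eqref{eq.Phiq.cont}, identifying the Fr\'echet derivative with $I+\varPhi_q d_q\varPsi_{\mu,q}W_q$, and then appealing to Lemma~\ref{l.invertible.psi.lin} and the inverse function theorem \cite[Theorem~5.2.3]{Ham82}. The only cosmetic difference is that the paper points to \eqref{eq.cont.lin.1}--\eqref{eq.cont.lin.2} rather than \eqref{eq.cont.lin.0} for the compactness bookkeeping in the $\mathcal{D}_{d\oplus d^*}$-graph norm, which is exactly the refinement you flag in your last paragraph.
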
 

\begin{proof} Taking into the account the continuity of the operator  \eqref{eq.Phiq.cont} 
for  $\delta \in (n/2,n)$, $\delta+1-n \not \in {\mathbb Z}_+$ we may  
argue as in the proof of Corollary \ref{c.invertible.psi.0}, replacing the scale 
$\mathcal{F}^{k,\mathbf{s} (s,\lambda,\lambda',\delta)} 
_{T, \varLambda^{q}}$ with the scale $\mathcal{F}^{k,\mathbf{s} (s,\lambda,\lambda',\delta)} 
_{T, \varLambda^{q}, {\mathcal D}_{d\oplus d^*}} \cap \mathcal{S}_{d^*}$ and formula 
\eqref{eq.cont.lin.0} with formulas \eqref{eq.cont.lin.1}, \eqref{eq.cont.lin.2}, 
to conclude that the 
non-linear operator \eqref{eq.heat.pseudo.nl1} 
is compact and continuous, too. 
Thus, as
in the proof of Corollary \ref{c.invertible.psi.0}, the statement 
follows now from the implicit function theorem 
for Banach spaces,   see  \cite[Theorem 5.2.3, p.~101]{Ham82}.
\end{proof}

Let us formulate the corresponding statement for equations \eqref{eq.NS}.

\begin{cor}
\label{c.open.NS.short}
Let $n\geq 2$, $1\leq q \leq n-1$, $a=1$,  
   $s$ and $k $ be positive integers,
   $0 < \lambda < \lambda' < 1$,
   $\delta \in (n/2,n)$, \textcolor{red}{$\delta \ne (n-1)$.} 
	Then, for any pair 
	$$
 (f^{(0)},u_0^{(0)}) 
\in \mathcal{F}^{k,\mathbf{s} (s,\lambda,\lambda',\delta)} _{T, \varLambda^q, {\mathcal D}
	_{d\oplus d^*}} 
 \times
   C^{2s+k+1,\lambda,\delta}_{\varLambda^q}  \cap \mathcal{S}_{d^\ast}
$$
admitting the solution $(u^{(0)},p^{(0)})$ 
to \eqref{eq.NS} in  
$\mathcal{F}^{k,\mathbf{s} (s,\lambda,\lambda',\delta)} _{T, \varLambda^q, {\mathcal D}
	_{d\oplus d^*}}  \times   \mathcal{F}^{k-1,\mathbf{s} (s-1,\lambda,\lambda',\delta-1)} 
	_{T, \varLambda^{q-1}, {\mathcal D}_{d\oplus d^*}}$, 
there is a number $\varepsilon > 0$ with the property that for all data 
$$
(f,u_0) 
\in \mathcal{F}^{k,\mathbf{s} (s,\lambda,\lambda',\delta)} _{T, \varLambda^q, {\mathcal D}
	_{d\oplus d^*}} 
 \times
   C^{2s+k+1,\lambda,\delta}_{\varLambda^q}  \cap \mathcal{S}_{d^\ast}
$$
satisfying the estimate
\begin{equation}
\label{eq.NS.open.est}
   \| f -   f^{(0)} 
   \|_{\mathcal{F}^{k,\mathbf{s} (s,\lambda,\lambda',\delta)} 
	_{T, \varLambda^q, {\mathcal D}
	_{d\oplus d^*}}}
 + \|u_0 -  u^{(0)} _0  \|_{C^{2s+k+1,\lambda,\delta} _{\varLambda^q}}
 < \varepsilon
\end{equation}
equations \eqref{eq.NS} have a unique solution in 
$   \mathcal{F}^{k,\mathbf{s} (s,\lambda,\lambda',\delta)} _{T, \varLambda^q, {\mathcal D}
	_{d\oplus d^*}} 
 \times
   \mathcal{F}^{k-1,\mathbf{s} (s-1,\lambda,\lambda',\delta-1)} _{T, \varLambda^{q-1}, 
	{\mathcal D}_{d\oplus d^*}} $.
\end{cor}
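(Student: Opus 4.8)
The plan is to read the statement as a local solvability (open mapping) result that follows by combining the reduction of the nonlinear problem \eqref{eq.NS} to a single operator equation, carried out exactly as in the proof of Theorem \ref{t.NS.deriv.unique}, with the open injective Fredholm property of Corollary \ref{c.invertible.psi}. First I would associate with any admissible pair $(f,u_0)$ the form
$$
L(f,u_0) := \varPhi_q d_q\big(\varPsi_{\mu,q} f + \varPsi_{\mu,q,0} u_0\big),
$$
which by Lemma \ref{l.heat.key1} and the continuity of the operator \eqref{eq.Phiq.cont} belongs to $\mathcal{F}^{k,\mathbf{s}(s,\lambda,\lambda',\delta)}_{T,\varLambda^q,{\mathcal D}_{d\oplus d^*}}\cap\mathcal{S}_{d^*}$ for $\delta\in(n/2,n)$, $\delta+1-n\notin{\mathbb Z}_+$. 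The map $L$ is continuous and linear, being a composition of bounded operators. As established in the proof of Theorem \ref{t.NS.deriv.unique} (see the chain \eqref{eq.pseudo.lin}--\eqref{eq.pseudo.lin.2}), solving \eqref{eq.NS} in the prescribed pair of spaces is equivalent to solving
$$
\mathcal{T}\,u := \big(I+\varPhi_q d_q \varPsi_{\mu,q}\mathcal{N}_q\big)u = L(f,u_0)
$$
for $u\in\mathcal{F}^{k,\mathbf{s}(s,\lambda,\lambda',\delta)}_{T,\varLambda^q,{\mathcal D}_{d\oplus d^*}}\cap\mathcal{S}_{d^*}$ and then recovering the pressure as $p=H_\mu\tilde p$, where $\tilde p$ is produced by statement (1) of Lemma \ref{l.deRham.Hoelder.t.new}.

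Next I would exploit Corollary \ref{c.invertible.psi}, according to which $\mathcal{T}$ is a continuous, injective and open self-mapping of that space. Writing $v^{(0)}:=L(f^{(0)},u_0^{(0)})$, the hypothesis that $(u^{(0)},p^{(0)})$ solves \eqref{eq.NS} gives $\mathcal{T}u^{(0)}=v^{(0)}$ via the equivalence above. Since $\mathcal{T}$ is open, the image under $\mathcal{T}$ of a neighbourhood of $u^{(0)}$ is a neighbourhood of $v^{(0)}$, so there is $\rho>0$ with the ball $B(v^{(0)},\rho)$ contained in the range of $\mathcal{T}$. Invoking the continuity of $L$, there is a constant $C>0$ with
$$
\|L(f,u_0)-v^{(0)}\|_{\mathcal{F}^{k,\mathbf{s}(s,\lambda,\lambda',\delta)}_{T,\varLambda^q,{\mathcal D}_{d\oplus d^*}}}\le C\Big(\|f-f^{(0)}\|_{\mathcal{F}^{k,\mathbf{s}(s,\lambda,\lambda',\delta)}_{T,\varLambda^q,{\mathcal D}_{d\oplus d^*}}}+\|u_0-u_0^{(0)}\|_{C^{2s+k+1,\lambda,\delta}_{\varLambda^q}}\Big).
$$
Setting $\varepsilon:=\rho/C$ then guarantees that data satisfying \eqref{eq.NS.open.est} produce $v:=L(f,u_0)\in B(v^{(0)},\rho)$, hence $v$ lies in the range of $\mathcal{T}$; the equation $\mathcal{T}u=v$ is thus solvable, and its solution is unique by the injectivity of $\mathcal{T}$. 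Recovering $p$ by the Leray--Helmholtz argument of Theorem \ref{t.NS.deriv.unique} and appealing once more to Lemma \ref{l.deRham.Hoelder.t.new} for the uniqueness of the pair then completes the argument.

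The main obstacle is not the abstract deduction -- which is a short consequence of openness, injectivity and the continuity of $L$ -- but the verification that the operator equation $\mathcal{T}u=v$ is genuinely equivalent to the full Cauchy problem \eqref{eq.NS}, including the recovery of the pressure $p$ in the correct weighted class $\mathcal{F}^{k-1,\mathbf{s}(s-1,\lambda,\lambda',\delta-1)}_{T,\varLambda^{q-1},{\mathcal D}_{d\oplus d^*}}$ and the preservation of the constraint equations $d_{q-1}^*u=0$ and $d_{q-2}^*p=0$ under the perturbation. This requires reproducing the identities \eqref{eq.pseudo.lin}--\eqref{eq.pseudo.lin.2} with the linear term $B_q(w,\cdot)$ replaced by the nonlinear $\mathcal{N}_q$, and it is precisely here that the weight restriction $\delta\in(n/2,n)$, $\delta\neq n-1$ is needed, so that Lemma \ref{l.deRham.Hoelder.t.new}(1) applies to solve for $\tilde p$ uniquely.
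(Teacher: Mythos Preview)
Your argument is correct and follows the same route as the paper: you reduce \eqref{eq.NS} to the operator equation $\mathcal{T}u=L(f,u_0)$ via the parabolic potentials and the Leray--Helmholtz projection $\varPhi_q d_q$, invoke the openness and injectivity of $\mathcal{T}$ from Corollary~\ref{c.invertible.psi} to solve it for nearby data, and then recover $p$ through Lemma~\ref{l.deRham.Hoelder.t.new}. The paper proceeds in exactly this way, and the nonlinear analogues of \eqref{eq.pseudo.lin.1}--\eqref{eq.pseudo.lin.2} that you flag as the ``main obstacle'' are precisely the identities \eqref{eq.pseudo.1}--\eqref{eq.pseudo.2} the paper writes out.
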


\begin{proof} Indeed, as we have seen 
Lemma \ref{l.heat.key1}  and the properties of the 
fundamental solution $\varPsi_\mu$ and the Leray-Helmholtz type 
projection $\varPhi_q d_q$ imply that the solution 
$(u^{(0)},p^{(0)})$ 
to \eqref{eq.NS} in  related to the data $(f^{(0)},u_0^{(0)})$ satisfies also 
the operator equation
$$
(I \! + \! \varPhi_q \, d_q \, \varPsi_{\mu,q} {\mathcal N}_q ) u^{(0)} = 
\varPhi_q \, d_q \, (\varPsi_{\mu,q} f^{(0)} + \varPsi_{\mu,q,0} u_0^{(0)})
$$
in the space $ \mathcal{F}^{k,\mathbf{s} (s,\lambda,\lambda',\delta)} _{T, \varLambda^q, {\mathcal D}
	_{d\oplus d^*}} $.

Estimate \eqref{eq.NS.open.est} and Corollary \ref{c.invertible.psi}
provide that the norm 
$$
\|\varPhi_q \, d_q (\varPsi_{\mu,q}  f +   \varPsi_{\mu,q,0}  u_0  - 
\varPsi_{\mu,q}  f^{(0)} 
 -  \varPsi_{\mu,q,0}  u^{(0)}_0)  \|_{
\mathcal{F}^{k,\mathbf{s} (s,\lambda,\lambda',\delta)} _{T, \varLambda^q, {\mathcal D}
	_{d\oplus d^*}} 
 \times
   \mathcal{F}^{k-1,\mathbf{s} (s-1,\lambda,\lambda',\delta-1)} _{T, \varLambda^{q-1}, 
	{\mathcal D}_{d\oplus d^*}} 
	}
$$ 
is sufficiently small for the operator equation 
\begin{equation} \label{eq.operator}
(I \! + \! \varPhi_q \, d_q \, \varPsi_{\mu,q} {\mathcal N}_q ) u = 
\varPhi_q \, d_q \, (\varPsi_{\mu,q} f + \varPsi_{\mu,q,0} u_0)
\end{equation}
to admit the unique solution in the space $ \mathcal{F}^{k,\mathbf{s} (s,\lambda,\lambda',\delta)} _{T, \varLambda^q, {\mathcal D}
	_{d\oplus d^*}} $. 

By the discussion in the proof of Theorem \ref{t.NS.deriv.unique}, see \eqref{eq.range.D} and Lemma \ref{l.deRham.Hoelder.t.new}, 
 \begin{equation} \label{eq.pseudo.1} 
\varPhi_q \, d_q \, \varPsi_{\mu,q} {\mathcal N}_q u = 
\varPhi_q \, d_q \, \varPsi_\mu M_1^{(q)} ((d_q \oplus d_{q-1}^* u,u),
\end{equation}
$$
d_q (I- \varPhi_q \, d_q)  \Big(\varPsi_{\mu,q}  f +  \varPsi_{\mu,q,0}  u_0 
- \varPsi_{\mu,q} {\mathcal N}_q u \Big) =0
$$
if 
$\delta \in (n/2,n)$, $\delta+1-n \not \in {\mathbb Z}_+$. Hence, applying statement 
(1) of Lemma \ref{l.deRham.Hoelder.t.new} we see that there is a unique form 
$\tilde p   \in
     \mathcal{F}^{k-1,\mathbf{s} (s,\lambda,\lambda',\delta-1)} _{T, \varLambda^{q-1}, 
	{\mathcal D}_{d\oplus d^*}} \cap {\mathcal S}_{d^*}$ satisfying 
 \begin{equation} \label{eq.pseudo.2} 
	d_{q-1} \tilde p = (I- \varPhi_q \, d_q)  \Big(\varPsi_{\mu,q}  f +  \varPsi_{\mu,q,0}  u_0 
- \varPsi_\mu  {\mathcal N}_q u\Big). 
\end{equation}
Taking in account \eqref{eq.pseudo.lin},
\eqref{eq.pseudo.lin.1}, \eqref{eq.pseudo.lin.2}, we conclude that 
\begin{equation*}  
u \! + \!  \varPsi_{\mu,q}  {\mathcal N}_q u + d_{q-1}\tilde  p  =
\varPsi_{\mu,q}  f +  \varPsi_{\mu,q,0}  u_0
\end{equation*}
Then the form $p = H_\mu \tilde  p$ belongs to the space 
$\mathcal{F}^{k-1,\mathbf{s} (s-1,\lambda,\lambda',\delta-1)} _{T, \varLambda^{q-1}, 
	{\mathcal D}_{d\oplus d^*}} \cap {\mathcal S}_{d^*}$. 
Again, the properties 
of the fundamental solution $\varPsi_\mu$ mean that the pair $(u,p) $ is a solution 
to the Cauchy problem \eqref{eq.NS}. Moreover for any solution $(u',p')$
to \eqref{eq.NS} in the  class $ \mathcal{F}^{k,\mathbf{s} (s,\lambda,\lambda',\delta)} _{T, \varLambda^q, {\mathcal D}
	_{d\oplus d^*}} $ is a solution to 
\begin{equation}  \label{eq.Cauchy.pseudo}
\left\{
\begin{array}{llll}
H_\mu u' \! + \!  \varPhi_q d_q 
 {\mathcal N}_q u'    & = & \varPhi_q d_q 
f  & (x,t) \in {\mathbb R}^n \times (0,T), 
\\
   u
& =
& u_0,
& (x,t) \in \mathbb{R}^n \times \{ 0 \}.
\end{array}
\right.
\end{equation}
As the solutions to \eqref{eq.Cauchy.pseudo} and \eqref{eq.operator} 
in the  class $ \mathcal{F}^{k,\mathbf{s} (s,\lambda,\lambda',\delta)} 
_{T, \varLambda^q, {\mathcal D}
	_{d\oplus d^*}} $ coincide, Corollary  \ref{c.invertible.psi} 
	yields that problem \eqref{eq.Cauchy.pseudo} has no more than one solution 
in the weighted H\"older spaces, i.e. $u'=u$.  Then the uniqueness of the solution  
$(u,p)$ to \eqref{eq.NS} follows from Lemma \ref{l.deRham.Hoelder.t.new} because
	$d_{q-1}  p = (I- \varPhi_q \, d_q)  \Big(  f  -  {\mathcal N}_q u\Big)$,
that was to be proved.
\end{proof}

Finally, in a similar way we obtain the statement corresponding to $a=0$.

\begin{cor}
\label{c.open.NS.short.0}
Let $n\geq 2$, $0\leq q \leq n$, $a=0$,  
   $s$ and $k $ be positive integers,
   $0 < \lambda < \lambda' < 1$,
   $\delta \in (n/2,+\infty)$. 
	Then, for any pair 
	$
(f^{(0)},u_0^{(0)}) 
\in \mathcal{F}^{k,\mathbf{s} (s,\lambda,\lambda',\delta)} _{T, \varLambda^q} 
 \times
   C^{2s+k+1,\lambda,\delta}_{\varLambda^q} 
$ 
admitting the solution $	u^{(0)}$ 
to \eqref{eq.NS} in the space 
$\mathcal{F}^{k,\mathbf{s} (s,\lambda,\lambda',\delta)} _{T, \varLambda^q}  $, 
there is a number $\varepsilon > 0$ with the property that for all data 
$
(f,u_0) 
\in \mathcal{F}^{k,\mathbf{s} (s,\lambda,\lambda',\delta)} _{T, \varLambda^q} 
 \times    C^{2s+k+1,\lambda,\delta}_{\varLambda^q}  
$ 
satisfying the estimate
\begin{equation}
\label{eq.NS.open.est.0}
   \| f -   f^{(0)} 
   \|_{\mathcal{F}^{k,\mathbf{s} (s,\lambda,\lambda',\delta)} 
	_{T, \varLambda^q}}
 + \|u_0 -  u^{(0)} _0  \|_{C^{2s+k+1,\lambda,\delta} _{\varLambda^q}}
 < \varepsilon
\end{equation}
equations \eqref{eq.NS} have a unique solution 
$u\in   \mathcal{F}^{k,\mathbf{s} (s,\lambda,\lambda',\delta)} _{T, \varLambda^q}  $.
\end{cor}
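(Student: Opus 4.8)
The plan is to reduce the nonlinear Cauchy problem \eqref{eq.NS} with $a=0$ to the operator equation studied in Corollary \ref{c.invertible.psi.0} and then to read off the conclusion from the openness and injectivity established there. First I would note that, exactly as in the reduction carried out in the proof of Theorem \ref{t.NS.deriv.unique.0}, the mapping properties of the parabolic potentials (Lemma \ref{l.heat.key1}) together with the identities $H_\mu \varPsi_{\mu,q} = I$, $H_\mu \varPsi_{\mu,q,0} u_0 = 0$ and $(\varPsi_{\mu,q,0} u_0)|_{t=0} = u_0$ show that a form $u \in \mathcal{F}^{k,\mathbf{s}(s,\lambda,\lambda',\delta)}_{T,\varLambda^q}$ solves \eqref{eq.NS} (with $p=0$) for the data $(f,u_0)$ if and only if it solves
\begin{equation*}
(I + \varPsi_{\mu,q}\,\mathcal{N}_q)\, u = \varPsi_{\mu,q} f + \varPsi_{\mu,q,0} u_0 =: v .
\end{equation*}
In particular the given solution $u^{(0)}$ satisfies this equation with right-hand side $v^{(0)} := \varPsi_{\mu,q} f^{(0)} + \varPsi_{\mu,q,0} u_0^{(0)}$, and by Lemma \ref{l.heat.key1} both $v$ and $v^{(0)}$ lie in $X := \mathcal{F}^{k,\mathbf{s}(s,\lambda,\lambda',\delta)}_{T,\varLambda^q}$ under the standing hypotheses $s,k \in \mathbb{N}$, $\delta > n/2$.

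Next I would invoke Corollary \ref{c.invertible.psi.0}, by which the mapping $A := I + \varPsi_{\mu,q}\mathcal{N}_q$ is a continuous, Fredholm, injective and open self-map of $X$. Openness forces the image $A(X)$ to be open in $X$; since $v^{(0)} = A u^{(0)} \in A(X)$, there is a radius $r > 0$ such that the ball $\{\, v \in X : \|v - v^{(0)}\|_X < r \,\}$ is contained in $A(X)$. Consequently, for every such $v$ the equation $A u = v$ has a solution $u \in X$, and the global injectivity of $A$ makes this solution unique in the whole scale $X$.

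It then remains to translate the data estimate \eqref{eq.NS.open.est.0} into the smallness of $\|v - v^{(0)}\|_X$. Because $v - v^{(0)} = \varPsi_{\mu,q}(f - f^{(0)}) + \varPsi_{\mu,q,0}(u_0 - u_0^{(0)})$ and the linear potential map $(f,u_0) \mapsto \varPsi_{\mu,q} f + \varPsi_{\mu,q,0} u_0$ is bounded from $\mathcal{F}^{k,\mathbf{s}(s,\lambda,\lambda',\delta)}_{T,\varLambda^q} \times C^{2s+k+1,\lambda,\delta}_{\varLambda^q}$ into $X$ (Lemma \ref{l.heat.key1}, as already used in Theorem \ref{t.NS.deriv.unique.0}), there is a constant $c > 0$ with $\|v - v^{(0)}\|_X \leq c\,\varepsilon$ whenever \eqref{eq.NS.open.est.0} holds. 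Choosing $\varepsilon := r/c$ then places $v$ inside the ball above, so that $A u = v$---equivalently \eqref{eq.NS}---has a unique solution $u \in X$, as claimed.

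I expect no genuinely new obstacle here: all the analysis---the compactness of $\varPsi_{\mu,q}\mathcal{N}_q$, the Fredholm property via the invertibility of the Fr\'echet derivative $I + \varPsi_{\mu,q} B_q(w,\cdot)$ from Lemma \ref{l.invertible.psi.lin.0}, and above all the injectivity extracted from the energy estimate \eqref{eq.energy.lin.2} and Gronwall's lemma---has already been absorbed into Corollary \ref{c.invertible.psi.0}. The present statement is its soft, open-mapping consequence, and the only point demanding care is the equivalence between the differential problem and the operator equation, which is literally the reduction already performed for the linear case $a=0$.
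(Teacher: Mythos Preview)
Your proposal is correct and follows essentially the same route the paper intends: the paper simply says ``in a similar way'' to the $a=1$ case (Corollary~\ref{c.open.NS.short}), and your reduction of \eqref{eq.NS} with $a=0$ to the fixed-point equation $(I+\varPsi_{\mu,q}\mathcal{N}_q)u=\varPsi_{\mu,q}f+\varPsi_{\mu,q,0}u_0$, followed by the openness/injectivity from Corollary~\ref{c.invertible.psi.0} and the continuity of the data-to-potential map from Lemma~\ref{l.heat.key1}, is precisely that simplification (the pressure step disappears because $p=0$). Nothing is missing.
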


Thus, we see that there is crucial difference between problem 
\eqref{eq.NS} in the "local"{} situation where $a=0$ and the "non-local"{}
situation where $a=1$. As in the second case the problem is equivalent 
to a "pseudo-differential" Cauchy problem \eqref{eq.Cauchy.pseudo}, 
we observe some restrictions on possible asymptotic behaviour 
of solutions at the infinity with respect to the space variables and 
some additional loss of smoothness of the solutions. 
The reason is that we deal with scales of parabolic H\"{o}lder spaces, where 
the dilation principle is partially neglected with regard to the weight because
we need to provide some continuity of the integral operators $\varPhi_q$ 
and $\varPhi_q d_q$.

\bigskip

The investigation was supported by a grant of the Foundation for the advancement of theoretical
physics and mathematics ``BASIS''.

\end{document}